\newcommand{\Z}{{\mathbb Z}}
\newcommand{\twod}{{\operatorname{2-dense}}}
\newtheorem{thm}{Theorem}[section]
\newtheorem{theorem}[thm]{Theorem}
\newtheorem{corollary}[thm]{Corollary}
\newtheorem{lemma}[thm]{Lemma}	
\newtheorem{proposition}[thm]{Proposition}
\theoremstyle{definition}
\newtheorem{definition}[thm]{Definition}
\theoremstyle{remark}
\newtheorem*{lemma*}{Lemma}
\numberwithin{equation}{section}
\title{Polynomials with divisors of every degree}
\author{Lola Thompson}
\address{Department of Mathematics\\ 
6188 Kemeny Hall\\
Dartmouth College\\
Hanover, NH 03755, USA}
\email[] {Lola.Thompson@Dartmouth.edu}
\begin{document}

\begin{abstract} 
\noindent We consider polynomials of the form $t^n-1$ and determine when members of this family have a divisor of every degree in $\Z[t]$. With $F(x)$ defined to be the number of such integers $n \leq x$, we prove the existence of two positive constants $c_1$ and $c_2$ such that $$c_1 \frac{x}{\log x} \leq F(x) \leq c_2 \frac{x}{\log x}.$$ \end{abstract}

\maketitle 

\section{Introduction and statement of results}

Which polynomials with integer coefficients have integral divisors of every degree? The trivial answer is that $f(t) = 0$ is the unique polynomial with this property. However, if we clarify the problem by specifying that we are interested in polynomials $f(t)$ with divisors of every degree up to $\deg f(t)$, then the question becomes more interesting. Certainly, any polynomial that splits completely into linear factors, such as $f(t) = t^n$, satisfies this criterion. However, there are other choices of polynomials that are not as obvious. In this paper, we examine polynomials of the form $t^n-1$, where $n$ is a positive integer. 

In order to determine the values of $n$ for which $t^n-1$ has a divisor of every degree up to $n$, it will be helpful to use the following identity: \begin{equation}\label{cyclotomic} t^n-1 = \prod_{d \mid n} \Phi_d(t),\end{equation} where $\Phi_d(t)$ is the $d^{th}$ cyclotomic polynomial. Since $\deg \Phi_d(t) = \varphi(d)$ and each $\Phi_d(t)$ is irreducible, then the following statements are equivalent:

\indent \textbf{(1)} The polynomial $t^n-1$ has a divisor of every degree between $1$ and $n$.
 
\indent \textbf{(2)} Every integer $m$ with $1 \leq m \leq n$ can be written in the form $$m = \sum_{d \in \mathcal{D}} \varphi(d),$$ where $\mathcal{D}$ is a subset of divisors of $n$. 

We will call such a positive integer $n$ \textit{$\varphi$-practical}. The nomenclature stems from the striking similarity between the statement in \textbf{(2)} and the definition of a practical number. A positive integer $n$ is \textit{practical} if every $m$ with $1 \leq m \leq n$ can be written as a sum of distinct positive divisors of $n$; that is, $m = \sum_{d \in \mathcal{D}} d$, where $\mathcal{D}$ is a subset of the divisors of $n$.

In this paper, we prove the following results on $\varphi$-practical numbers:

\begin{theorem}\label{density} The set of $\varphi$-practical numbers has asymptotic density $0$. \end{theorem}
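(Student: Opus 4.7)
The plan is to convert the combinatorial criterion for $\varphi$-practicality into a lower bound on the number of divisors $d(n)$ of $n$, and then to invoke a classical fact about the normal order of $\Omega(n)$ to see that this lower bound fails outside a set of asymptotic density zero.

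For the combinatorial step I would use the equivalent characterization (2) from the excerpt directly. If $n$ is $\varphi$-practical, then every integer in $\{0,1,\dots,n\}$ is a sum $\sum_{d\in\mathcal{D}}\varphi(d)$ for some $\mathcal{D}\subseteq\{d:d\mid n\}$ (where $\mathcal{D}=\emptyset$ yields $0$). Since the number of subsets of divisors is $2^{d(n)}$ and these subset sums must realize at least $n+1$ distinct values, pigeonhole forces
\[
2^{d(n)}\geq n+1,\qquad\text{equivalently}\qquad d(n)\geq \log_2(n+1).
\]

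For the density step I would cite the Hardy--Ramanujan theorem (in its form proved via Tur\'an's second-moment argument): for every $\varepsilon>0$, the set of $n$ with $\Omega(n)\leq (1+\varepsilon)\log\log n$ has asymptotic density $1$. For each such $n$,
\[
d(n)\leq 2^{\Omega(n)}\leq (\log n)^{(1+\varepsilon)\log 2}.
\]
Because $\log 2<1$, I can choose $\varepsilon$ small enough that $(1+\varepsilon)\log 2<1$; then for $n$ sufficiently large the right-hand side is strictly smaller than $\log_2(n+1)$. Such $n$ therefore violate the bound from the previous step and cannot be $\varphi$-practical, so the $\varphi$-practical numbers lie inside a set of density zero, which is Theorem~\ref{density}.

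I do not anticipate a serious obstacle; the only real decision is which form of the Hardy--Ramanujan theorem to cite, and the version for $\Omega$ is convenient because $d(n)\leq 2^{\Omega(n)}$. The argument is crude in that it uses only $d(n)$ and not the actual values $\varphi(d)$ for $d\mid n$; in particular it will not yield the sharper bound $F(x)=O(x/\log x)$ quoted in the abstract, which should require a substantially more careful analysis of the prime-factorization patterns that can occur among $\varphi$-practical numbers.
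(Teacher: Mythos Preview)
Your proposal is correct and is essentially the same argument as the paper's: both derive the pigeonhole inequality $2^{\tau(n)}\geq n$ from $\varphi$-practicality and then use the normal order of $\Omega(n)$ together with $\tau(n)\leq 2^{\Omega(n)}$ to conclude that $\tau(n)\leq(\log n)^{(1+\varepsilon)\log 2}$ on a set of density $1$, which is incompatible with the pigeonhole bound. The paper fixes $\varepsilon=1/1000$ and writes the exponent as $0.7$, but otherwise the two proofs coincide.
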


\begin{theorem}\label{count} Let $F(x) = \# \{n \leq x : n$ is $\varphi$-practical$\}$. There exist two positive constants $c_1$ and $c_2$ such that for $x \geq 2$, we have $$c_1 \frac{x}{\log x} \leq F(x) \leq c_2 \frac{x}{\log x}.$$ \end{theorem}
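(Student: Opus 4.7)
The plan is to establish both inequalities by developing a recursive structural theory of $\varphi$-practical numbers in analogy with known treatments of ordinary practical numbers. The cornerstone is a pair of closely related structural results.

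I would first prove a \emph{sufficient condition}: if $n$ is $\varphi$-practical, $p$ is a prime not dividing $n$, and $p \leq n+2$, then $np$ is also $\varphi$-practical. Any $k \in [1,np]$ can be written as $k = a(p-1) + b$ with $a, b \in [0,n]$ (for $k \leq n(p-1)$ take $a = \lfloor k/(p-1)\rfloor$; otherwise take $a = n$), and then the $\varphi$-practicality of $n$ represents $a$ and $b$ as subset sums of the $\varphi(d)$ with $d \mid n$, which reassemble into $k = \sum \varphi(dp) + \sum \varphi(d)$ using divisors of $np$. Conversely, I would establish a \emph{necessary condition}: if $n > 1$ is $\varphi$-practical with largest prime factor $p$ and $p$-free part $m = n/p^{v_p(n)}$, then $p \leq m+2$. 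This follows by applying the standard completeness criterion $\varphi(d_{i+1}) \leq 1 + \sum_{j \leq i}\varphi(d_j)$ to the sorted list of divisors of $n$ at the step where $d_{i+1} = p$; any divisor of $n$ with $\varphi$-value less than $p-1$ must divide $m$, so the right-hand side is at most $1+m$. As a consequence, every $\varphi$-practical $n$ satisfies $P^+(n) \leq \sqrt{n}+O(1)$.

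For the lower bound, I would iterate the sufficient condition: each sequence of primes $2 = q_1 < q_2 < \cdots < q_k$ with $q_{i+1} \leq q_1q_2\cdots q_i + 2$ yields a squarefree $\varphi$-practical number $q_1 \cdots q_k$. An inductive counting argument applied to the sum $\sum_{m\,\varphi\text{-prac}} \pi(\min(m+2,\, x/m))$, combined with partial summation and the prime number theorem, should give $F(x) \geq c_1 x/\log x$. For the upper bound, I would write each $\varphi$-practical $n \leq x$ uniquely as $n = mp^a$ with $p = P^+(n)$, $\gcd(m,p) = 1$, and $a \geq 1$; by the necessary condition, $m$ is itself $\varphi$-practical with $P^+(m) < p \leq m+2$. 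This leads to
\begin{equation*}
F(x) \;\leq\; 1 + \!\!\sum_{\substack{m\,\varphi\text{-practical}\\ m \leq x/2}}\!\! \#\bigl\{p \text{ prime} : P^+(m) < p \leq \min(m+2,\, x/m)\bigr\} \;+\; O(x^{2/3}),
\end{equation*}
where the error absorbs the contribution of higher prime powers ($a \geq 2$, which forces $p \leq x^{1/3}$). Combined with the inductive hypothesis $F(y) \leq c_2\, y/\log y$ and the prime number theorem, this should close to give $F(x) \leq c_2\, x/\log x$.

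The main difficulty is the upper bound. A naive interchange of summation loses a factor of $\log\log x$ since $\sum_{p \leq \sqrt{x}} 1/p \asymp \log\log x$; closing the induction requires exploiting the smoothness restriction $P^+(m) < p$ to save precisely this factor, in the style of the treatment of ordinary practical numbers. Verifying that the analogous inductive estimates close with the correct constants in the $\varphi$-practical setting is the delicate point of the proof.
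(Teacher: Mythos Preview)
Your upper-bound recursion rests on the claim that if $n$ is $\varphi$-practical with largest prime $p$, then the $p$-free part $m = n/p^{v_p(n)}$ is itself $\varphi$-practical. This is false: the paper's own example is $n = 315 = 3^2\cdot 5\cdot 7$, which is $\varphi$-practical, while $m = 45 = 3^2\cdot 5$ is not (neither $22$ nor $23$ can be represented). Your necessary condition $p \leq m+2$ is correct, but it does \emph{not} propagate $\varphi$-practicality down to $m$; it only says $m$ is what the paper calls \emph{weakly $\varphi$-practical}. So the sum you write down over ``$m$ $\varphi$-practical'' does not account for all $\varphi$-practical $n$, and the induction never gets off the ground. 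The paper circumvents this precisely by passing to the weakly $\varphi$-practical notion (which \emph{is} closed under stripping the top prime power), observing that every even weakly $\varphi$-practical number is practical in the ordinary sense, and for odd ones multiplying by a suitable power of $2$ to land in $(x,2x]$; this reduces the upper bound in one stroke to Saias' bound $PR(2x)\ll x/\log x$, with no induction needed.

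For the lower bound your structural lemma (the sufficient condition) matches the paper's, and your plan of counting chains $2=q_1<q_2<\cdots$ with $q_{i+1}\le q_1\cdots q_i+2$ is in principle sound, but ``an inductive counting argument\dots should give $F(x)\ge c_1 x/\log x$'' hides exactly the hard part: making that induction close is essentially Saias' theorem for $2$-dense integers, which requires his Buchstab-type comparison with the auxiliary function $H(x,y)$. The paper does not redo this analysis; instead it introduces \emph{strictly $2$-dense} numbers (squarefree $n$ with $d_{i+1}/d_i<2$ for all interior ratios), proves these are all $\varphi$-practical, and then shows that a positive proportion of Saias' $2$-dense numbers are strictly $2$-dense via an obstruction-counting argument. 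You have correctly identified the shape of the problem, but you underestimate the work on both sides and, critically, your upper-bound decomposition is broken as stated.
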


While Theorem \ref{count} immediately implies Theorem \ref{density}, there is a much simpler proof of Theorem \ref{density} that we will present in section 2. 

In order to prove Theorem \ref{count}, we will rely on several results and tools developed in the literature on practical numbers, which we will now outline in a brief history. The term ``practical'' was coined by Srinivasan in 1948.  In 1950, Erd\H{o}s \cite{erdos} remarked, without proof, that the practical numbers have asymptotic density 0. In 1954, B. M. Stewart \cite{stewart} gave a classification of the practical numbers: \begin{proposition}[Stewart's Condition] If $n = p_1^{e_1} p_2^{e_2} \cdots {p_j}^{e_j}$, where $p_1 < p_2 < \cdots < p_j$ are primes and $e_i \geq 1$ for $i = 1, \cdots, j$, then $n$ is practical if and only if for every $i$, $p_i \leq \sigma(p_1^{e_1} p_2^{e_2} \cdots p_{i-1}^{e_{i-1}}) + 1$, where $\sigma$ is the sum-of-divisors function. (Note that Stewart's Condition implies that all practical numbers except $n=1$ are even.)\end{proposition}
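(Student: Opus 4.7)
I would handle the two directions separately, writing $n_0:=1$ and $n_i:=p_1^{e_1}\cdots p_i^{e_i}$. The forward direction is a quick contrapositive: if some $i$ satisfies $p_i>\sigma(n_{i-1})+1$, then $m:=\sigma(n_{i-1})+1$ lies in $[1,n]$, and every divisor of $n$ strictly less than $p_i$ has all its prime factors among $\{p_1,\ldots,p_{i-1}\}$, hence divides $n_{i-1}$. The sum of any subset of such divisors is at most $\sigma(n_{i-1})=m-1$, so $m$ is not a sum of distinct divisors of $n$, contradicting practicality.

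The reverse direction is an induction on the number of distinct prime factors $j$, anchored by two supporting lemmas. The first (call it Lemma~A) asserts that if $n$ is practical then every integer in $[0,\sigma(n)]$---not merely $[1,n]$---is expressible as a sum of distinct divisors of $n$. I would prove this by listing the divisors $d_1<d_2<\cdots<d_r$ and first deducing that practicality forces $d_{k+1}\le 1+d_1+\cdots+d_k$ for every $k$ (otherwise $1+d_1+\cdots+d_k$ lies in $[1,n]$ but can only be written using divisors from $\{d_1,\ldots,d_k\}$, whose total is already $d_1+\cdots+d_k$). A straightforward induction on $k$ then shows that every integer in $[0,d_1+\cdots+d_k]$ is realizable using only $d_1,\ldots,d_k$.

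The second (Lemma~B) says that if $n$ is practical, $p$ is a prime with $\gcd(n,p)=1$ and $p\le\sigma(n)+1$, then $np^e$ is practical for every $e\ge 1$. Given $1\le m\le np^e$, I would use the greedy base-$p$ expansion $m=k_ep^e+k_{e-1}p^{e-1}+\cdots+k_0$ with $0\le k_i<p$ for $i<e$ and $0\le k_e\le n$. Since $k_i\le p-1\le\sigma(n)$ and $k_e\le n\le\sigma(n)$, Lemma~A lets me write each $k_i=\sum_{d\in S_i}d$ with $S_i$ a set of distinct divisors of $n$. Then $m=\sum_i\sum_{d\in S_i}d\,p^i$ is a sum of divisors of $np^e$, and the hypothesis $\gcd(n,p)=1$ ensures that no divisor $d\,p^i$ is repeated across different $i$'s (a coincidence $d_1p^{i_1}=d_2p^{i_2}$ with $i_1<i_2$ would force $p\mid d_1$). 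Applying Lemma~B with $n=n_{j-1}$, $p=p_j$, $e=e_j$ at each inductive step closes the induction.

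I expect Lemma~A to be the main delicate step: bootstrapping the representability window from $[1,n]$ up to $[1,\sigma(n)]$ is exactly what allows the greedy expansion in Lemma~B to handle every ``digit'' $k_i$, and the coprimality hypothesis is then used in precisely one place, to rule out collisions among the divisors $d\,p^i$.
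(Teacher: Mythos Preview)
Your argument is correct, but there is nothing to compare against: the paper does not prove Stewart's Condition. Proposition~1.3 is stated in the introduction as a classical result and attributed to Stewart \cite{stewart}; the paper also states the companion building-block result (Lemma~3.1, essentially your Lemma~B) without proof. The discussion following Lemma~3.1 even comments on the gap between Lemma~3.1 and the full Proposition---namely that one must verify every practical number arises by the recursive construction---which is precisely the point your forward (contrapositive) direction handles.

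For what it is worth, your proof is the standard one. A couple of minor remarks: in the forward direction you should note explicitly that $m=\sigma(n_{i-1})+1<p_i\le n$, so $m$ really lies in $[1,n]$; and in Lemma~A the deduction that $d_{k+1}\le 1+d_1+\cdots+d_k$ uses $1+d_1+\cdots+d_k<d_{k+1}\le n$, which holds because $d_{k+1}\le d_r=n$. Both facts are implicit in your write-up but worth spelling out. The ``greedy'' expansion in Lemma~B is really just: set $k_e=\lfloor m/p^e\rfloor\le n$ and expand the remainder $m-k_ep^e<p^e$ in base $p$; saying it that way avoids any ambiguity about what ``greedy base-$p$'' means when the top digit is allowed to exceed $p-1$.
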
 

Let $P\!R(x) = \#\{n \leq x : n \ \hbox{is practical}\}.$ Determining the true size of $P\!R(x)$ has been of interest for some time. In 1986, Hausman and Shapiro \cite{h&s} showed that there exists a positive constant $C_\beta$ such that $$P\!R(x) \leq C_\beta \frac{x}{(\log x)^\beta}$$ for every fixed $\beta < 2^{-1}(1/\log 2 - 1)^2 = 0.0979.$ This result was improved upon by Tenenbaum \cite{tenenbaum} in the same year, who showed that for $\lambda = 4.20002$ and for $x \geq 16$, $$\frac{x}{\log x}(\log \log x)^{- \lambda} \ll P\!R(x) \ll \frac{x}{\log x} \log \log x \log \log \log x.$$ Based on computational data, Margenstern \cite{marg} conjectured in 1991 that $P\!R(x) \sim c x/\log x$, where $c$ is a positive constant. This conjecture was partially proven in 1997 by Saias \cite{saias}, who showed that there exist two strictly positive constants $c_3$ and $c_4$ such that for $x \geq 2$, we have 

\begin{equation}\label{saiaseqn} c_3 \frac{x}{\log x} \leq P\!R(x) \leq c_4 \frac{x}{\log x}.\end{equation}

\noindent We use this theorem and its proof in the proof of Theorem \ref{count}.

It is interesting to note that our work on the $\varphi$-practical integers allows us to classify a second family of polynomials with divisors of every degree. Namely, $t^n+1$ has an integral divisor of every degree up to $n$ if and only if $n$ is odd and $\varphi$-practical. This follows from the fact that, when $n$ is even, $t^n+1$ has no divisor of degree $1$. On the other hand, when $n$ is odd, we have $t^n+1 = -((-t)^n-1)$, hence $t^n+1$ has divisors of all of the same degrees as those of $t^n-1$.

Throughout this paper, we will use the following notation. Let $n$ be a positive integer. Let $\omega(n)$ denote the number of distinct prime factors of $n$ and let $\Omega(n)$ denote the number of prime factors of $n$ counting multiplicity. We will use $\tau(n)$ to designate the number of positive divisors of $n$. Let $P(n)$ denote the largest prime factor of $n$, with $P(1) = 1$, and let $\Psi(x, y) = \#\{n \leq x : P(n) \leq y \}$. Moreover, let $P^-(n)$ denote the smallest prime factor of $n$, with $P^-(1) = + \infty$. Lastly, we will use $\log_k(x)$ to denote the $k^{th}$ iterate of the natural logarithm function.

%%%Proof of Theorem 1.2
\section{Proof of Theorem \ref{density}}

Below we present our proof of Theorem \ref{density}, which we believe is likely to be similar to the argument that Erd\H{o}s had in mind for the practical numbers.

\begin{proof}  From the definitions of the functions $\omega(n), \tau(n), \Omega(n)$, it is clear that $2^{\omega(n)} \leq \tau(n) \leq 2^{\Omega(n)}$. Fix $\varepsilon = 1/1000$. Since $\omega(n)$ and $\Omega(n)$ both have normal order $\log \log n$ (cf.\cite[Theorem 431]{h&w}), then for all $n$ except for a set with asymptotic density 0, we have

\begin{equation} \label{logineq} 2^{(1-\varepsilon) \log \log n} \leq \tau(n) \leq 2^{(1+ \varepsilon) \log \log n} = (\log n)^{(1+ \varepsilon) \log 2} < (\log n)^{0.7}. \end{equation} 

We can factor the polynomial $t^n-1 = \prod_{d \mid n} \Phi_d(t)$, where $\Phi_d(t)$ is the $d^{th}$ cyclotomic polynomial. Since each $\Phi_d(t)$ is irreducible, the number of divisors of $t^n -1$ in $\Z[t]$ is $2^{\tau(n)}$, since every divisor is uniquely determined by deciding whether or not to include each $\Phi_d(t)$ with $d \mid n$ in its factorization. Thus, in order for $n$ to be $\varphi$-practical, we need $n \leq 2^{\tau(n)}$; otherwise, $t^n-1$ would not have a divisor of every degree less than or equal to $n$. Taking the logarithm of both sides of this inequality and combining it with \eqref{logineq}, we have $$\log n \leq \tau(n) \log 2 < \tau(n) < (\log n)^{0.7}.$$ But this is impossible, so the numbers $n$ that are $\varphi$-practical are in the set with asymptotic density 0 where \eqref{logineq} does not hold.\end{proof}

%%%Upper Bound Proof
\section{Proof of the upper bound of Theorem \ref{count}}

Stewart's Condition shows the form that every practical number must take. The key to proving this is a recursive argument showing that each practical number $M$ can be used to generate new practical numbers via the following set of conditions:

\begin{lemma}[Stewart]  \label{stewartkey}If $M$ is a practical number and $p$ is a prime with $(p, M) = 1$, then $M' = p^k M$ is practical (for $k \geq 1$) if and only if $p \leq \sigma(M) + 1$. \end{lemma}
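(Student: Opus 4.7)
The plan is to prove the biconditional directly, using a useful strengthening of the definition of practicality as the main intermediate step.

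Step 1 (extended representation lemma): I would first show that if $M$ is practical, then every integer $r$ in $[0,\sigma(M)]$ (not merely $[0,M]$) is a sum of distinct divisors of $M$. The proof proceeds by showing that practicality is equivalent to the divisor inequality $d_{j+1} \leq 1 + d_1 + d_2 + \cdots + d_j$ for every $j < t$, where $1 = d_1 < d_2 < \cdots < d_t = M$ enumerate the divisors of $M$. Indeed, if the inequality failed at some $j$, then the integer $m = 1 + d_1 + \cdots + d_j$ would satisfy $1 \leq m < d_{j+1} \leq M$, and any representation of $m$ would be forced to use only the divisors $d_1, \ldots, d_j$; but their total sum is $m - 1$, a contradiction. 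Once this divisor inequality is in hand, a greedy algorithm that processes $d_t, d_{t-1}, \ldots, d_1$ in decreasing order (picking $d_i$ into the sum whenever the running remainder is at least $d_i$) correctly represents every $r \in [0, \sigma(M)]$: the inequality is exactly what is needed to maintain the invariant that at each stage the remainder is bounded by the sum of the divisors still to be considered.

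Step 2 (forward direction of Stewart's lemma): Assume $M' = p^k M$ is practical and consider $m = p - 1$. Every divisor of $M'$ has the form $p^j d$ with $d \mid M$ and $0 \leq j \leq k$, so the divisors of $M'$ that are at most $p - 1$ are precisely the divisors of $M$ strictly less than $p$. Their total is bounded above by $\sigma(M)$, and any representation of $p - 1$ as a subset sum of these forces $p - 1 \leq \sigma(M)$, i.e., $p \leq \sigma(M) + 1$.

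Step 3 (backward direction): Given $m \in [1, p^k M]$, write $m = a p^k + b$ with $0 \leq a \leq M$ and $0 \leq b < p^k$, and expand $b$ in standard base $p$ as $b = \sum_{j=0}^{k-1} a_j p^j$ with digits $0 \leq a_j \leq p - 1 \leq \sigma(M)$. By the extended lemma of Step 1, each digit $a_j$ is a sum of distinct divisors of $M$, and $a$ is too (by practicality of $M$). Substituting these representations back in expresses $m$ as a sum of terms of the form $p^j d$ with $d \mid M$ and $0 \leq j \leq k$, i.e., as a sum of divisors of $p^k M$. Distinctness is automatic: terms with different exponents $j$ have different $p$-adic valuations (since $(p, M) = 1$), and terms with the same $j$ came from distinct divisors of $M$.

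The main obstacle is Step 1, the extended representation lemma. Once the equivalence between practicality and the greedy divisor inequality is established, the remainder of the argument is essentially base-$p$ bookkeeping and handles all $k \geq 1$ uniformly, avoiding any inductive bootstrapping on $k$.
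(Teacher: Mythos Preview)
The paper does not actually supply a proof of this lemma: it is stated with attribution to Stewart and cited to \cite{stewart}, and the text moves on immediately to discuss how Stewart's Condition follows from it. So there is no in-paper argument to compare against.

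Your proof is correct and is essentially the classical argument. The key is exactly your Step~1: practicality of $M$ forces the divisor inequality $d_{j+1}\le 1+d_1+\cdots+d_j$, and that inequality in turn makes the greedy algorithm succeed on the full range $[0,\sigma(M)]$, not just $[0,M]$. Your Step~2 is clean (the only divisors of $p^kM$ not exceeding $p-1$ are divisors of $M$, so $p-1\le\sigma(M)$), and your Step~3 base-$p$ decomposition works because $(p,M)=1$ guarantees that the terms $p^j d$ with $d\mid M$ are pairwise distinct across different $j$. One tiny point worth making explicit: in Step~3 you need $a\le M$ to be representable, and the case $a=M$ is handled by taking the single divisor $M$ itself; the case $a=0$ is the empty sum. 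Otherwise the argument is complete and handles all $k\ge1$ at once.
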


Stewart's Condition would be a simple corollary of Lemma \ref{stewartkey} if it were not for the following subtlety: while Lemma \ref{stewartkey} provides a method for building an infinite family of practical numbers, it is not immediately obvious that all practical numbers arise in the prescribed manner. Stewart's Condition confirms our suspicions.

The simple necessary-and-sufficient condition in Lemma \ref{stewartkey} turns out to be a powerful tool. In addition to being an important component in the proof of Stewart's condition, it is also used in Saias' proofs of the upper and lower bounds for the size of $P\!R(x)$. Unfortunately, we have not found such a simple statement for the $\varphi$-practical numbers. Stewart's Condition implies that each practical number $M' > 1$ can be constructed by multiplying a smaller practical number $M$ by a prime power $p^k$, where $p > P(m)$. However, the same cannot be said for the $\varphi$-practical numbers. For example, $315 = 3^2 \cdot 5 \cdot 7$ is $\varphi$-practical, but $45 = 3^2 \cdot 5$ is not, since there are no totient-sum representations for $22$ and $23.$ 

A more natural means of classifying the $\varphi$-practical numbers would be to use the following criterion: Let $w_1 \leq w_2 \leq \cdots \leq w_k$ be the set of totients of divisors of a positive integer $n$, rearranged so that they appear in non-decreasing order. Then $n$ is $\varphi$-practical if and only if, for each $i < k$, we have $$w_{i+1} \leq 1 + w_1 + \cdots + w_i.$$ Unfortunately, this criterion for $\varphi$-practicality is not particularly useful to us, since the totients of divisors of $n$ are not monotonic in general.  

To get around these problems, we will only give a necessary condition for a number to be $\varphi$-practical, which is all that is needed in order to determine the stated upper bound for $F(x)$. In section 4, we will give a necessary-and-sufficient condition for a squarefree integer to belong to the set of $\varphi$-practical numbers, which will be used to obtain the lower bound for $F(x)$ in section 5.

\begin{definition} Let $n = p_1^{e_1} \cdots p_k^{e_k}$, where $p_1 < p_2 < \cdots < p_k$ are primes and $e_i \geq 1$ for $1 \leq i \leq k$. Define $m_i = p_1^{e_1} \cdots p_i^{e_i}$ for $i = 0,..., k-1.$ We say that such an integer $n$  is \textit{weakly $\varphi$-practical} if the inequality $p_{i+1} \leq m_i + 2$ holds for $i=0,...,k-1$. \end{definition}

\begin{lemma}\label{necessary} Every $\varphi$-practical number is weakly $\varphi$-practical. \end{lemma}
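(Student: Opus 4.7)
The plan is to prove the contrapositive: if $n$ is not weakly $\varphi$-practical, then $n$ is not $\varphi$-practical. Write $n = p_1^{e_1} \cdots p_k^{e_k}$ as in the definition, and suppose there is some index $i \in \{0,1,\dots,k-1\}$ with $p_{i+1} > m_i + 2$, i.e.\ $p_{i+1} \geq m_i + 3$. I would exhibit a specific target $m \leq n$ that admits no totient-sum representation; the natural choice is
\[ m = m_i + 1. \]
First I would verify $m \leq n$: this is immediate when $i \leq k-1$ because $m_i \leq n/p_k \leq n/2$, and the case $n=1$ (where $k=0$) is vacuous.

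Next I would partition the divisors $d$ of $n$ into those that divide $m_i$ and those that are divisible by at least one prime $p_j$ with $j \geq i+1$. For any divisor of the latter type, $\varphi(d) \geq \varphi(p_{i+1}) = p_{i+1}-1 \geq m_i + 2 > m$, so such a divisor alone already overshoots $m$, and since all totients are nonneg\-ative, no set $\mathcal{D}$ containing a divisor of this type can sum to exactly $m$. Therefore any representation $m = \sum_{d \in \mathcal{D}} \varphi(d)$ would have to use only divisors of $m_i$.

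Finally I would invoke the standard identity $\sum_{d \mid m_i} \varphi(d) = m_i$ to conclude that the largest sum of totients of distinct divisors of $m_i$ is $m_i$ itself, which is strictly less than $m = m_i + 1$. This contradicts the assumption that $n$ is $\varphi$-practical and establishes $p_{i+1} \leq m_i + 2$ for every $i$, as required. There is no real obstacle here; the argument rests entirely on the two observations that divisors involving a prime $\geq p_{i+1}$ have totient exceeding $m_i + 1$, and that the totient-sum identity caps the usable contribution from divisors of $m_i$ at $m_i$.
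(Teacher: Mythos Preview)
Your proof is correct and is essentially the same argument as the paper's: both prove the contrapositive by showing that $m_i+1$ has no totient-sum representation, using the identity $\sum_{d\mid m_i}\varphi(d)=m_i$ for the ``small'' divisors and the bound $\varphi(d)\geq p_{i+1}-1>m_i+1$ for divisors involving a prime $\geq p_{i+1}$. The only cosmetic difference is that the paper treats $i=0$ separately (with target $2$), whereas you handle it uniformly and are a bit more explicit about verifying $m_i+1\leq n$.
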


\begin{proof} Let $n = p_1^{e_1} \cdots p_k^{e_k}$, with $p_1 < p_2 < \cdots < p_k$ and $e_i \geq 1$ for $i \leq 1 \leq k$. Suppose that there exists an integer $i$ for which $p_{i+1} > m_i + 2$. Observe that, if $i = 0$, then $m_0 = 1$. Hence, if $p_{i+1} > m_i + 2$ holds at $i = 0$, we must have $p_1 > 3.$ Then, $n > 3$ and $t^n-1$ has no divisor of degree $2$, so $n$ is not $\varphi$-practical. Thus, we may assume that $i > 0$. Now, $p_{i+1} > m_i + 2$ implies that $\varphi(p_{i+1}) > m_i + 1$. Moreover, it is always the case that $m_i = \sum_{d \mid m_i} \varphi(d)$. Hence, if $d \mid n$ and $d \nmid m_i$, then $\varphi(d) > m_i + 1.$ In particular, $t^n - 1$ has no divisor of degree $m_i + 1$. Therefore, $n$ cannot be $\varphi$-practical.\end{proof}

The converse to Lemma \ref{necessary} is false. For example, $45$ is not $\varphi$-practical, but it is weakly $\varphi$-practical. We can use Lemma \ref{necessary} in order to obtain the stated upper bound for $F(x)$. 

\begin{lemma}\label{pproduct} If $n$ is practical and $p \leq P(n)$, then $pn$ is practical. The same holds for weakly $\varphi$-practical numbers. \end{lemma}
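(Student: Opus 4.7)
The plan is to verify the defining inequality (Stewart's condition for practical numbers, and $p_{i+1}\le m_i+2$ for weakly $\varphi$-practical numbers) directly for $pn$, by tracking how the ordered prime factorization and its initial prefix products change when $n$ is multiplied by $p$. Writing $n=p_1^{e_1}\cdots p_k^{e_k}$ with $p_1<\cdots<p_k$ and $m_i=p_1^{e_1}\cdots p_i^{e_i}$ as in the definition, I denote by $m'_i$ the analogous prefix products for $pn$. The governing observation is a monotonicity principle: multiplying by a prime $p\le p_k$ can only enlarge each prefix product, and at worst it inserts one new prime into the ordered prime list at a position at or before $p_k$.

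I would split the verification into two cases. In Case~1, $p=p_j$ for some $j$; then the prime support and its ordering are unchanged, $m'_i=m_i$ for $i<j$ and $m'_i=p\,m_i\ge m_i$ for $i\ge j$. Consequently Stewart's condition $p_{i+1}\le\sigma(m_i)+1$ (in the practical case) and the inequality $p_{i+1}\le m_i+2$ (in the weakly $\varphi$-practical case) for $n$ transfer at once to $pn$, since each $m_i$ has only been replaced by something no smaller. In Case~2, $p\nmid n$, and $p$ must be inserted between some $p_j$ and $p_{j+1}$ in the prime list (with the boundary subcase $p<p_1$ only possible in the weakly $\varphi$-practical setting, and then only with $p_1=3$ and $p=2$, since the defining inequality forces $p_1\le 3$). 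The new prefixes satisfy $m'_i=m_i$ for $i\le j$, $m'_{j+1}=p\,m_j$, and $m'_i=p\,m_{i-1}$ for $i>j+1$. The only inequality requiring genuine checking is the one at the insertion point: for practicality, $p\le\sigma(m_j)+1$ follows from $p\le p_{j+1}-1\le\sigma(m_j)$; for weak $\varphi$-practicality, $p\le m_j+2$ follows from $p\le p_{j+1}-1\le m_j+1$. All remaining inequalities are looser after the multiplication by $p\ge 2$ and hence hold automatically.

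The main obstacle I anticipate is simply the indexing and bookkeeping around the insertion of a new prime in Case~2, especially the boundary subcase $p<p_1$ in the weakly $\varphi$-practical setting, where the first two inequalities ($2\le 3$ and $3\le 4$) must be checked by hand and the later ones recovered by comparing $p\,m_{i-1}+2$ to $m_{i-1}+2$. No substantial number-theoretic input beyond the definitions and Stewart's classification is needed; the argument is fundamentally the observation that the relevant inequality, once satisfied at a given index for $n$, is only made easier at the corresponding (possibly shifted) index for $pn$.
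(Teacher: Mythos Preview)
Your proof is correct and follows the same approach as the paper, which simply states that the lemma ``is immediate from Stewart's condition and from the definition of weakly $\varphi$-practical numbers.'' You have spelled out in full the case analysis and index bookkeeping that the paper leaves implicit; in particular your handling of the insertion point in Case~2 and the boundary subcase $p<p_1$ in the weakly $\varphi$-practical setting is accurate and complete.
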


\begin{proof} This is immediate from Stewart's condition and from the definition of weakly $\varphi$-practical numbers. \end{proof}

\begin{lemma}\label{evenpractical} Every even weakly $\varphi$-practical number is practical. \end{lemma}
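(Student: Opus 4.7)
The plan is to induct on the number of prime factors, building up $n$ one prime power at a time and applying Stewart's recursive criterion (Lemma \ref{stewartkey}) at each stage. Write $n = 2^{e_1} p_2^{e_2} \cdots p_k^{e_k}$ with $2 < p_2 < \cdots < p_k$, and put $m_i = 2^{e_1} p_2^{e_2} \cdots p_i^{e_i}$ as in the definition. I will show by induction on $i$ that each $m_i$ is practical; the case $i = k$ gives the conclusion.

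For the base case $i = 1$, observe that $m_1 = 2^{e_1}$ is a power of $2$, hence trivially practical (every integer up to $2^{e_1}$ has a binary expansion using the distinct divisors $1, 2, 4, \dots, 2^{e_1-1}$).

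For the inductive step, assume $m_{i-1}$ is practical, and note that $m_i = p_i^{e_i} \cdot m_{i-1}$ with $\gcd(p_i, m_{i-1}) = 1$. By Lemma \ref{stewartkey}, it suffices to verify that $p_i \leq \sigma(m_{i-1}) + 1$. Here is where the weakly $\varphi$-practical hypothesis enters: it gives $p_i \leq m_{i-1} + 2$. Since $i \geq 2$ we have $m_{i-1} \geq 2$, so $1$ and $m_{i-1}$ are two distinct divisors of $m_{i-1}$ and hence $\sigma(m_{i-1}) \geq m_{i-1} + 1$. Combining,
\[
p_i \;\leq\; m_{i-1} + 2 \;\leq\; \sigma(m_{i-1}) + 1,
\]
which is exactly the hypothesis needed to apply Lemma \ref{stewartkey} and conclude that $m_i$ is practical.

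There is no real obstacle here; the argument is essentially a one-line comparison once Stewart's lemma is in hand. The only point to be careful about is that the weakly $\varphi$-practical bound $p_i \leq m_{i-1} + 2$ is off by one from Stewart's bound $p_i \leq \sigma(m_{i-1}) + 1$, but this one-unit gap is absorbed for free by the trivial inequality $\sigma(m_{i-1}) \geq m_{i-1} + 1$, which holds precisely because $m_{i-1} > 1$. (This is also where the hypothesis that $n$ be even is crucial: it forces $p_1 = 2$ so that $m_{i-1}$ is already $\geq 2$ at the inductive step; if the smallest prime were larger, the argument would fail at $i = 1$, as illustrated by the example $45$ following Lemma \ref{necessary}.)
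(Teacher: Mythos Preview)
Your proof is correct and follows essentially the same approach as the paper: both verify that the weakly $\varphi$-practical inequality $p_{i} \leq m_{i-1} + 2$ implies Stewart's inequality $p_i \leq \sigma(m_{i-1}) + 1$ via the trivial bound $\sigma(m_{i-1}) \geq m_{i-1} + 1$ valid once $m_{i-1} \geq 2$. Your version is slightly more explicit in spelling out the induction and the base case $m_1 = 2^{e_1}$, whereas the paper compresses this into a direct appeal to Stewart's criterion, but the substance is identical.
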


\begin{proof} Let $n$ be an even weakly $\varphi$-practical number with $\omega(n) = k$. Since $n$ is weakly $\varphi$-practical, it must be the case that $p_{i+1} \leq m_i + 2$ for all $i < k$. Furthermore, since $n \geq 2$, we have $m_i + 2 \leq \sigma(m_i) + 1$ for all $i \geq 1$. Hence, each $p_{i+1}$ satisfies the inequality from Lemma \ref{stewartkey}, so $n$ is practical.\end{proof} 

\begin{theorem} There exists a positive constant $c_2$ such that, for $x \geq 2$, we have $$F(x) \leq c_2 \frac{x}{\log x}.$$ \end{theorem}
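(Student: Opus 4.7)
The plan is to reduce the upper bound on $F(x)$ to Saias' bound \eqref{saiaseqn} for $P\!R(x)$, by combining Lemma~\ref{necessary} and Lemma~\ref{evenpractical} for even $n$ and a short Stewart-style argument for odd $n$. I would split the count of $\varphi$-practical $n \le x$ according to the parity of $n$ and treat the two cases separately.

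For the even contribution, any even $\varphi$-practical $n$ is weakly $\varphi$-practical by Lemma~\ref{necessary}, and hence actually practical by Lemma~\ref{evenpractical}. Saias' bound \eqref{saiaseqn} then yields at most $c_4 x / \log x$ even $\varphi$-practical integers up to $x$, which is already of the desired shape.

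For the odd contribution, let $n > 1$ be odd and $\varphi$-practical. Lemma~\ref{necessary} gives $p_1 \le m_0 + 2 = 3$, and since $n$ is odd this forces $p_1 = 3$. I would show that $2n$ is practical for every such $n$, which gives an injection from the odd $\varphi$-practical integers $\le x$ into the practical integers $\le 2x$. To verify Stewart's condition for $2n = 2 \cdot 3^{e_1} p_2^{e_2} \cdots p_k^{e_k}$, the step introducing the prime $3$ requires only $3 \le \sigma(2) + 1 = 4$, while for each $i \ge 1$ the integer $m_i = 3^{e_1} p_2^{e_2} \cdots p_i^{e_i}$ is odd, so
\[
p_{i+1} \le m_i + 2 \le 3m_i + 1 \le 3\sigma(m_i) + 1 = \sigma(2m_i) + 1,
\]
where the first inequality is the weakly $\varphi$-practical hypothesis on $n$. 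Hence the odd $\varphi$-practical $n \le x$ are counted by at most $P\!R(2x) \le 2 c_4 x / \log(2x) \ll x / \log x$.

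Summing the two contributions yields $F(x) \le c_2 x / \log x$ for an appropriate positive constant $c_2$ and all $x \ge 2$. The main obstacle is the odd case: because there is no Stewart-type recursion available directly on the $\varphi$-practical numbers (as emphasized by the $315/45$ example in the text), I route through the practical numbers via $n \mapsto 2n$, and the crucial technical check is exactly the chain of inequalities displayed above. Everything else is bookkeeping once the parity split is made.
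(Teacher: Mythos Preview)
Your proof is correct and follows essentially the same approach as the paper: split by parity, handle even $n$ via Lemmas~\ref{necessary} and~\ref{evenpractical}, and for odd $n$ show that $2n$ is practical to inject into $P\!R(2x)$. The only cosmetic differences are that the paper cites Lemma~\ref{pproduct} rather than verifying Stewart's condition by hand, and maps each odd $n \le x$ to the unique $2^{\ell_0} n \in (x,2x]$ (instead of to $2n$), yielding the slightly cleaner single bound $F(x) \le P\!R(2x)$.
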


\begin{proof} If $n$ is a $\varphi$-practical number then, by Lemma \ref{necessary}, $n$ is weakly $\varphi$-practical. Thus, if $n$ is even, Lemma \ref{evenpractical} implies that $n$ is practical. If $n$ is odd, then $2^{\ell}n$ is practical for every $\ell \geq 1$, by Lemmas \ref{pproduct} and \ref{evenpractical}. Moreover, for each odd integer $n$ in $(0, x]$, there is a unique positive integer $\ell_0$ such that $2^{\ell_0}n$ is in the interval $(x, 2x].$ Therefore, we have \begin{align*} F(x) &= \#\{n \leq x : n \ \hbox{even and $\varphi$-practical}\} + \#\{n \leq x : n \ \hbox{odd and $\varphi$-practical}\}\\ & \leq \#\{n \leq x : n \ \hbox{is practical}\} + \#\{x < m \leq 2x: m \ \hbox{is practical}\} \\& = P\!R(2x).\end{align*} By \eqref{saiaseqn}, we have $$P\!R(x) \leq c_4 \frac{x}{\log x}.$$ Taking $c_2 = 2 c_4$, we obtain $F(x) \leq P\!R(2x) \leq c_2 \frac{x}{\log x}.$ \end{proof} 

%%%Key lemmas for the lower bound
\section{Preliminary lemmas for the lower bound of Theorem \ref{count}}

In order to acquire the stated lower bound for the size of the set of $\varphi$-practical numbers, it suffices to find a lower bound for the size of the set of squarefree $\varphi$-practicals. Although we were unable to give a necessary-and-sufficient condition that characterizes all $\varphi$-practical numbers, we are able to find such a condition for the squarefree $\varphi$-practical numbers. This condition will play a crucial role in section 5, when we give a proof for the lower bound of $F(x)$. In order to obtain this condition, we will need the following lemma, which is our analogue to Lemma \ref{stewartkey}.

\begin{lemma}\label{keylemma} If $M$ is $\varphi$-practical and $p$ is prime with $(p, M) = 1$, then $M' = pM$ is $\varphi$-practical if and only if $p \leq M + 2$. Moreover, $M' = p^{k}M, k \geq 2$ is $\varphi$-practical if and only if $p \leq M + 1$. \end{lemma}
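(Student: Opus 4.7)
The approach is to describe explicitly the set of totient-sums of subsets of divisors of $p^k M$ and then apply a simple interval-covering principle. Since $(p, M) = 1$, the divisors of $p^k M$ are $p^j d$ for $0 \leq j \leq k$ and $d \mid M$, with $\varphi(p^j d) = \varphi(p^j)\varphi(d)$. Hence the totient-sums of subsets of divisors of $p^k M$ are exactly the values
\[
t_0 + \varphi(p)\, t_1 + \varphi(p^2)\, t_2 + \cdots + \varphi(p^k)\, t_k,
\]
where each $t_j$ ranges independently over the set of totient-sums of subsets of divisors of $M$. Because $M$ is $\varphi$-practical, this set is $\{0, 1, \ldots, M\}$.

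The main tool is the following \emph{covering principle}: if $A \subseteq \Z$ contains $[0, N] \cap \Z$ and $\lambda, m \geq 1$, then $A + \{0, \lambda, 2\lambda, \ldots, m\lambda\}$ contains $[0, N + m\lambda] \cap \Z$ if and only if $\lambda \leq N + 1$. For the first claim ($k = 1$), sufficiency follows by taking $A = \{0, \ldots, M\}$, $\lambda = \varphi(p) = p - 1$, and $m = M$, yielding the condition $p \leq M + 2$. For necessity, I would consider the integer $M + 1$: since $\sum_{d \mid M}\varphi(d) = M$, any representation must include some term $\varphi(pd) = (p-1)\varphi(d)$, and the smallest positive such term is $p - 1$, forcing $p - 1 \leq M + 1$.

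For the second claim ($k \geq 2$), sufficiency proceeds by induction on $k$. The base case $k = 2$ applies the first claim (with $p \leq M + 1 \leq M + 2$) to conclude that $pM$ is $\varphi$-practical, then applies the covering principle with $A = \{0, \ldots, pM\}$ and $\lambda = \varphi(p^2) = p(p-1)$. The required inequality $\lambda \leq pM + 1$ becomes $p(p - 1 - M) \leq 1$, which holds whenever $p \leq M + 1$ since then $p - 1 - M \leq 0$. The inductive step has the same structure, passing from $p^{k-1}M$ to $p^k M$ with $\lambda = \varphi(p^k) = p^{k-1}(p-1)$. For necessity, I would probe with the integer $m = pM + 1$, which lies in $[1, p^k M]$ because $k \geq 2$: if the representation used only $t_0, t_1$, the sum would be at most $M + (p-1)M = pM$, so some $t_j > 0$ with $j \geq 2$, giving $m \geq \varphi(p^j) \geq p(p-1)$. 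Then $p(p-1) \leq pM + 1$; if $p \geq M + 2$, we would have $p(p - 1 - M) \geq p \geq 2 > 1$, a contradiction.

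The main subtlety is choosing the right test value for necessity in the $k \geq 2$ case: the candidate $M + 1$ only recovers the weaker bound $p \leq M + 2$, so one must look higher—specifically at $pM + 1$—to detect the sharper threshold that separates the case $k = 1$ from $k \geq 2$.
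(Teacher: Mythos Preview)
Your proof is correct and follows essentially the same route as the paper: the covering principle is exactly the paper's interval decomposition $n = (p-1)q + r$ (and its higher-power analogue via the choice of $q_1$), and your necessity probes at $M+1$ and $pM+1$ mirror the paper's use of Lemma~\ref{necessary} and its explicit check that $\varphi(p^2) > pM+1$. One small quibble: the ``if and only if'' in your covering principle is only valid when $A$ equals $[0,N]\cap\Z$ exactly, not merely contains it, but since you use only the ``if'' direction for sufficiency and argue necessity separately, this does not affect the argument.
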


\begin{proof} For the first case, we take $M' = pM$. If $p > M+2$, then Lemma \ref{necessary} implies that $M'$ cannot be $\varphi$-practical.

For the other direction, we assume that $p \leq M + 2$ and $M' = pM$. Suppose that we can write an integer $n$ in the form $n = (p-1) q + r$, with $0 \leq q, r \leq M$. Since $q, r \leq M$ and $M$ is $\varphi$-practical, we can write $q = \sum_{d \in \mathcal{D}} \varphi(d), r = \sum_{d' \in \mathcal{D}'} \varphi(d')$, for some subsets $\mathcal{D}, \mathcal{D}'$ of divisors of $M.$ Then

$$n = \displaystyle\sum_{pd \in p\mathcal{D}} \varphi(pd) + \displaystyle\sum_{D \in \mathcal{D}'} \varphi(D)$$ where $p\mathcal{D} = \{pd : d \in \mathcal{D}\}.$ There is no overlap between $p\mathcal{D}$ and $\mathcal{D'}$, since the first set only contains divisors of $pM$ that are not divisors of $M$. So, there exists a polynomial with degree $n$ that divides $t^{pM} - 1$.

Thus, in order to conclude that $M'$ is $\varphi$-practical, it remains for us to show that every integer $n \leq M'$ can be written in the form $(p-1) q + r$, with $0 \leq q, r \leq M$. We will break $[0, M']$ into subintervals of the form $[(p-1)q, (p-1)q + M]$. Since $p \leq M +2$ then $(p-1)q + M \geq (p-1)q + (p-2),$ which is adjacent to $(p-1)(q+1).$ Thus, all of the intervals are overlapping or, at least, contiguous. Moreover, the first subinterval starts at $0$ and the last subinterval ends at $M'.$ Thus, $M'$ is $\varphi$-practical.

For the second case, we take $M' = p^k M, k \geq 2$. We have seen that $p \leq M + 2$. Now, suppose that $p = M + 2$. Then, from the first case, we know that $pM$ is $\varphi$-practical. However, the smallest irreducible divisor of $x^{M'} -1$ that has degree larger than $pM$ has degree $\varphi(p^2)$. Since $p = M+2$, we have $$\varphi(p^2) = M^2 + 3M + 2 > M^2 + 2M  + 1 = pM + 1,$$ so there is no divisor of $t^{M'} - 1$ with degree $pM + 1$. Thus, $M'$ is not $\varphi$-practical if $p = M + 2$. 

For the other direction, we assume that $p \leq M+1$. We will use induction on the power of $p$, taking the case where $M' = pM$ to be our base case. For our induction hypothesis, we assume that $p^{k-1} M$ is $\varphi$-practical. Now, suppose that $n \in [0, p^k M]$. Let $q_1$ be the largest integer in $[0, M]$ with $\varphi(p^k) q_1 \leq n$. If $q_1 = M,$ then $$n - \varphi(p^k) q_1 = n - \varphi(p^k) M \leq (p^k - \varphi(p^k)) M = p^{k-1} M.$$ By our induction hypothesis, $p^{k-1} M$ is $\varphi$-practical, so we have $$n - \varphi(p^k)M = \sum_{d \in \mathcal{D}} \varphi(d)$$ where $\mathcal{D}$ is a subset of divisors of $p^{k - 1}M$. Thus, we can write $$n = \sum_{d \in \mathcal{D}} \varphi(d) + \sum_{d \mid M} \varphi(p^k d).$$ Therefore, when $q_1 = M$, we see that $n$ is $\varphi$-practical. If $q_1 < M$ then, using the assumption that $p \leq M + 1$, we have $$n - \varphi(p^k)q_1 < \varphi(p^k)(q_1 + 1) - \varphi(p^k)q_1 = \varphi(p^k) = p^{k-1}(p - 1) \leq p^{k-1}M.$$ Once again, we see that our induction hypothesis implies that $n$ is $\varphi$-practical.\end{proof}

Recall the definition of weakly $\varphi$-practical from section $3.$

\begin{corollary}\label{weakly} A squarefree integer $n$ is $\varphi$-practical if and only if it is weakly $\varphi$-practical. \end{corollary}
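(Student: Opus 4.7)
The forward direction ($\varphi$-practical implies weakly $\varphi$-practical) is immediate: it is exactly the content of Lemma \ref{necessary}, and no squarefree assumption is needed. So all the work lies in the converse.

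For the converse, my plan is to induct on $k = \omega(n)$. Write the squarefree, weakly $\varphi$-practical integer as $n = p_1 p_2 \cdots p_k$ with $p_1 < p_2 < \cdots < p_k$, and let $m_i = p_1 \cdots p_i$. By hypothesis $p_{i+1} \leq m_i + 2$ for all $0 \leq i \leq k-1$. The base case is $k = 0$, where $n = 1$ is trivially $\varphi$-practical. For the inductive step, observe that $m_{k-1} = p_1 \cdots p_{k-1}$ is itself squarefree and inherits weak $\varphi$-practicality from $n$, since the list of defining inequalities for $m_{k-1}$ (namely $p_{i+1} \leq m_i + 2$ for $0 \leq i \leq k-2$) is a sublist of those for $n$. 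By the inductive hypothesis $m_{k-1}$ is $\varphi$-practical.

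Now apply the first case of Lemma \ref{keylemma} with $M = m_{k-1}$ and $p = p_k$: we have $(p_k, m_{k-1}) = 1$ because $p_k \notin \{p_1, \ldots, p_{k-1}\}$, and $p_k \leq m_{k-1} + 2$ by the weak $\varphi$-practicality hypothesis applied at $i = k-1$. Hence $n = p_k m_{k-1}$ is $\varphi$-practical, completing the induction.

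I do not anticipate any real obstacle: the heavy lifting has already been done inside Lemma \ref{keylemma}, and squarefreeness is exactly the hypothesis that lets us invoke its first case (the easier $M' = pM$ clause) at each step, avoiding the stricter inequality $p \leq M + 1$ required for higher prime powers. The only points that need explicit mention are the triviality of the base case and the verification that the defining inequalities for weak $\varphi$-practicality descend from $n$ to $m_{k-1}$.
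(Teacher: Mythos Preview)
Your proof is correct and follows essentially the same argument as the paper: induction on $\omega(n)$, with the inductive step peeling off the largest prime $p_k$, noting that $m_{k-1}=n/p_k$ is still squarefree and weakly $\varphi$-practical, and then invoking the first case of Lemma~\ref{keylemma} together with Lemma~\ref{necessary} for the forward direction. The only cosmetic difference is that you make explicit why $m_{k-1}$ inherits weak $\varphi$-practicality and why squarefreeness forces only the $M'=pM$ clause of Lemma~\ref{keylemma}, points the paper leaves implicit.
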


\begin{proof} Let $n = p_1 \cdots p_k$, with $p_1 < p_2 < \cdots < p_k$, and suppose that $n$ is weakly $\varphi$-practical. We proceed by induction on the number of prime factors of $n$. For our base case, we observe that $n = 1$ is both weakly $\varphi$-practical and $\varphi$-practical. Suppose that all squarefree integers $n$ with at most $k-1$ prime factors that are weakly $\varphi$-practical are, in fact, $\varphi$-practical. Hence, since $\frac{n}{p_k}$ is weakly $\varphi$-practical, it is also $\varphi$-practical, according to our induction hypothesis. But then $n = p_k \cdot \frac{n}{p_k}$ with $\frac{n}{p_k}$ $\varphi$-practical, and $p_k \leq \frac{n}{p_k} + 2$, since $n$ is weakly $\varphi$-practical. Therefore, by Lemma \ref{keylemma}, $n$ is $\varphi$-practical. The other direction of the proof is an immediate consequence of Lemma \ref{necessary}.\end{proof}

%%%Lower Bound Proof
\section{Proof of the lower bound of Theorem \ref{count}}

Throughout the remainder of this paper, we will use the following notation. Let $$F'(x) = \# \{n \leq x : n \ \mathrm{is} \ \varphi \hbox{-practical} \ \mathrm{and} \ \mathrm{squarefree} \}.$$ 

Let $1 = d_1(n) < d_2(n) < \cdots  < d_{\tau(n)}(n) = n$ denote the increasing sequence of divisors of $n.$ Let $p_1 < p_2 < \cdots < p_{\omega(n)}$ be the increasing sequence of prime factors of $n$. For integers $n$, we define $$T(n) = \mathrm{max}_{1 \leq i < \tau(n)} \frac{d_{i+1}(n)}{d_i(n)}.$$ Let $$D(x, y, z) = \#\{1 \leq n \leq x : T(n) \leq z, P(n) \leq y \ \mathrm{and} \ n \ \mathrm{is} \ \mathrm{squarefree} \},$$ and let $$D(x) = D(x, x, 2).$$ \begin{definition} An integer $n$ is called $z$-\textit{dense} if $n$ is squarefree and $T(n) \leq z.$ \end{definition} Note: This is not the way that Saias defines $2$-dense integers. His definition does not include the stipulation that $n$ is squarefree. For the purposes of this paper, we will only need to consider squarefree integers. The results of Saias that we cite below are valid for squarefree $n$. 

Using the notation defined above, we see that $D(x, x, z)$ counts the number of $z$-dense integers up to $x$. Saias notes that the set of $2$-dense integers is properly contained within the set of squarefree practical numbers. As a result, if $P\!R'(x)$ denotes the number of squarefree practical numbers up to $x$, then a lower bound for $D(x)$ will also be a lower bound for $P\!R'(x).$ The bulk of Saias' work is, therefore, in obtaining a lower bound for $D(x, y, z)$, where $2 \leq z \leq y \leq x$. In the particular case when $x = y$ and $z = 2$, he obtains the following inequalities, the first of which immediately yields his stated lower bound for $PR(x)$: 

\begin{lemma}[Saias]\label{SaiasD} There exist positive constants $\kappa_1$ and $\kappa_2$ such that $$\kappa_1 \frac{x}{\log x} \leq D(x) \leq \kappa_2 \frac{x}{\log x}$$ for all $x \geq 2$. \end{lemma}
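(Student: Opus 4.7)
The plan is to deduce both bounds from Saias' original analysis in \cite{saias}, observing that the squarefree restriction does not disrupt his arguments.

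For the upper bound, I would first establish the containment of the squarefree $2$-dense integers within the practical numbers. Given such an $n$, if $1 = d_1 < d_2 < \cdots < d_{\tau(n)} = n$ are its divisors and $d_{i+1}/d_i \leq 2$ for each $i$, a brief induction shows $d_1 + \cdots + d_i \geq d_{i+1} - 1$, which allows every integer in $[1, n]$ to be written greedily as a sum of distinct divisors. Hence $D(x) \leq P\!R(x)$, and \eqref{saiaseqn} gives $D(x) \leq c_4 \cdot x/\log x$, yielding the upper bound with $\kappa_2 = c_4$.

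For the lower bound, I would follow Saias' recursive construction, maintaining squarefreeness throughout. The pivotal structural observation is: if $n$ is a squarefree $2$-dense integer with $p = P(n)$, then $m = n/p$ is a squarefree $2$-dense integer with $P(m) < p \leq 2m$; conversely, starting from any squarefree $2$-dense $m$ and a prime $p \in (P(m), 2m]$, the product $n = pm$ is squarefree and $2$-dense, since when $p > m$ the divisor set of $pm$ splits as $\{d : d \mid m\} \sqcup \{pd : d \mid m\}$ and the only new ratio introduced in the sorted order, namely $p/m$, is at most $2$ (the case $p < m$ is handled similarly by a more delicate interleaving). This correspondence induces a Buchstab-type recursion for $D$ expressed as a sum over primes $p$.

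The main obstacle is extracting the quantitative lower bound $D(x) \gg x/\log x$ from this recursion. Saias handles the analogous recursion (without the squarefree constraint) by a dyadic decomposition together with Mertens' theorem, yielding the desired order of magnitude. Each step in his argument --- removal of the largest prime, the range of admissible new primes, and the dyadic bookkeeping --- carries over under the squarefree restriction with only cosmetic changes, so one may conclude by invoking his estimates and checking that squarefreeness is preserved at every stage. This gives the lower bound with a constant $\kappa_1$ obtained directly from the squarefree version of Saias' recursion.
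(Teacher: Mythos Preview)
The paper does not prove this lemma at all: it is stated as a result of Saias, cited from \cite{saias}, with the single remark (just before Definition~5.1) that Saias' arguments remain valid when one restricts to squarefree integers. Your high-level approach---deduce both inequalities from \cite{saias} after verifying that the squarefree restriction is harmless---therefore matches the paper exactly.

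Where you go beyond the paper is in sketching content for the two bounds. Your upper bound is fine: the greedy argument showing that $T(n)\le 2$ implies $n$ is practical is standard, and then \eqref{saiaseqn} applies. Your lower bound sketch, however, mischaracterizes Saias' method. Describing it as ``a dyadic decomposition together with Mertens' theorem'' is not accurate: the substantive work in \cite{saias} is the construction of an auxiliary function $H(x,y)$ satisfying a Buchstab-type inequality (this is exactly what the present paper records as Lemmas~\ref{saiasmain} and~\ref{newsaias}) and a delicate comparison of $D(x,y)$ with $H(x,y)$ via smooth-number estimates and the Dickman function. A dyadic argument with Mertens alone would not close the recursion to yield order $x/\log x$. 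Also, your claimed converse direction---that if $m$ is squarefree $2$-dense and $p\in(P(m),2m]$ then $pm$ is $2$-dense---is precisely Lemma~\ref{TCondition} (Tenenbaum), and the ``more delicate interleaving'' in the case $p<m$ is not as immediate as you suggest; the paper explicitly notes that Tenenbaum's proof of Lemma~\ref{TCondition} relies on a structure theorem \cite[Lemma~2.2]{tenenbaum}.

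None of this is a gap in your proposal per se, since both you and the paper are ultimately appealing to \cite{saias}. But if you intend your sketch to stand on its own, the lower-bound paragraph would need to be rewritten to reflect what Saias actually does.
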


Unfortunately, the same relationship does not exist between $F'(x)$ and $D(x)$. For example, $66$ is $2$-dense but not $\varphi$-practical. In order to get around this problem, we introduce the following modified definition of $2$-dense integers:

\begin{definition}\label{std} A squarefree integer $n$ is \textit{strictly $2$-dense} if $\frac{d_{i+1}}{d_i} < 2$ holds for all $i$ satisfying $1 < i < \tau(n) - 1,$ and $\frac{d_2}{d_1} = 2 = \frac{d_{\tau(n)}}{d_{\tau(n) - 1}}$. Note that this forces $n$ to be even. \end{definition} Although this modification is subtle, it is sufficient for removing the non-$\varphi$-practical $2$-dense integers from our consideration. 

\begin{lemma}\label{modified} Every strictly $2$-dense number is $\varphi$-practical. \end{lemma}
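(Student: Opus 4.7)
The plan is to reduce the claim, via Corollary \ref{weakly}, to showing that every strictly $2$-dense integer is weakly $\varphi$-practical; this reduction is legitimate because strictly $2$-dense integers are squarefree by Definition \ref{std}. Write $n = p_1 p_2 \cdots p_k$ with $p_1 < p_2 < \cdots < p_k$ and $m_i = p_1 \cdots p_i$. The goal then becomes verifying the inequalities $p_{i+1} \leq m_i + 2$ for all $0 \leq i \leq k-1$.

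The base cases are handled directly by the two endpoint conditions and by strict $2$-density at the second ratio. The hypothesis $d_2/d_1 = 2$ forces $p_1 = 2 \leq 3 = m_0 + 2$, covering $i = 0$. For $i = 1$ (assuming $k \geq 2$), no divisor of $n$ can lie strictly between $2$ and $p_2$ because any larger divisor would involve a prime at least $p_2$, so $d_3 = p_2$; strict $2$-density at position $2$ then yields $p_2/2 < 2$, so $p_2 = 3 \leq m_1 + 2$.

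The heart of the argument is the range $2 \leq i \leq k-1$, which I will treat by contradiction. Suppose $p_{i+1} > m_i$. Then every divisor of $n$ less than $p_{i+1}$ must be a divisor of $m_i$, since a divisor of $n$ involving any prime $p_j$ with $j \geq i+1$ is itself at least $p_{i+1}$. There are exactly $2^i$ divisors of $m_i$, so $d_{2^i} = m_i$ and $d_{2^i - 1} = m_i/p_1 = m_i/2$, giving $d_{2^i}/d_{2^i - 1} = 2$ exactly. For $i$ in this range, $1 < 2^i - 1 < 2^k - 1 = \tau(n) - 1$, so strict $2$-density demands this ratio be strictly less than $2$, a contradiction. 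Hence $p_{i+1} \leq m_i \leq m_i + 2$ in this range. Combining the three cases shows $n$ is weakly $\varphi$-practical, and then Corollary \ref{weakly} closes the argument.

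The main conceptual obstacle is selecting which divisor ratio to test against strict $2$-density. A first instinct is to use the ratio $d_{2^i+1}/d_{2^i} = p_{i+1}/m_i$, but this only produces the too-weak bound $p_{i+1} < 2m_i$ and leaves a gap with the required $p_{i+1} \leq m_i + 2$. The correct move is to examine the ratio immediately before $m_i$, which is pinned to the forbidden value $2$ precisely when no divisor of $n$ lies in the open interval $(m_i/2,\, m_i)$; the assumption $p_{i+1} > m_i$ is exactly the hypothesis that rules out any such intervening divisor, so the contradiction becomes automatic.
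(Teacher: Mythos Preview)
Your proof is correct and follows essentially the same route as the paper: both reduce via Corollary~\ref{weakly} to showing that a strictly $2$-dense $n$ is weakly $\varphi$-practical, and both obtain the key contradiction from the consecutive-divisor ratio $m_i/(m_i/2)=2$ landing at a non-endpoint index. The only cosmetic difference is that the paper argues by a single global contrapositive (assume some $p_j>m_{j-1}+2$ and exhibit the bad ratio), whereas you verify the inequalities $p_{i+1}\le m_i+2$ index by index with explicit base cases $i=0,1$; in fact your contradiction hypothesis $p_{i+1}>m_i$ yields the slightly stronger conclusion $p_{i+1}\le m_i$ for $i\ge 2$.
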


\begin{proof} Write $n = p_1 p_2 \cdots p_k$, where $2 = p_1 < p_2 < \cdots < p_k.$ If $k = 1$, then the only strictly $2$-dense integer with exactly $1$ prime factor is $n = 2$, which is also $\varphi$-practical. Assume that $k > 1$ and $n$ is not $\varphi$-practical. Then, as $n$ is squarefree, Lemma \ref{weakly} implies that $n$ is not weakly $\varphi$-practical either. As a result, the inequality from the definition of weakly $\varphi$-practical numbers must fail for some prime $p_j$ dividing $n$, with $j > 1$. Let $n_j = \prod_{i < j} p_i$, so $p_j > n_j + 2$. Now, the largest proper divisor of $n_j$ is at most $\frac{n_j}{2}.$ Moreover, there are no divisors of $n$ between $\frac{n_j}{2}$ and $n_j$, since all of the other prime factors of $n$ are greater than or equal to $p_j$. Therefore, there exist divisors $d_i, d_{i+1}$ of $n$ with $\frac{d_{i+1}}{d_i} \geq 2$, namely $d_i = \frac{n_j}{2}$ and $d_{i+1} = n_j.$ Since $j > 1$, then $d_{i+1} = n_j > 2 = d_2$, so $i > 1.$ On the other hand, since $n_j < p_j$, we must have $i < \tau(n) - 1$. Thus, we have shown that there exists an index $i$ with $1 < i < \tau(n) - 1$ such that $\frac{d_{i+1}}{d_i} \geq 2$, so $n$ is not strictly $2$-dense.  \end{proof}

Let $$D'(x) = \#\{1 \leq n \leq x : n \ \mathrm{is} \ \mathrm{strictly} \ 2\hbox{-dense} \}.$$ From Lemma \ref{modified}, a lower bound for $D'(x)$ will also serve as a lower bound for $F'(x)$. One might wonder why we have only defined $D'(x)$ in terms of a single parameter, while Saias' function $D(x)$ is initially defined in terms of both $x$ and $y$. This departure stems from a difference in our approaches. Saias' proofs for $D(x)$ relied on a number of iterative arguments that restricted the size of the largest prime factor of $n$, but our proofs for $D'(x)$ will not require any similar restrictions. 

In his proof of the lower bound for $D(x, y)$, Saias relies heavily on the following condition for $2$-dense numbers:

\begin{lemma}[Tenenbaum] \label{TCondition} For every integer $n \geq 1$, $T(n) \leq 2$ if and only if $$T(n/P(n)) \leq 2$$ and $$P(n) \leq \sqrt{2n}.$$ \end{lemma}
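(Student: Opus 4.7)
The plan is to prove each direction using the equivalent reformulation that $T(n) \leq 2$ holds if and only if every integer $k$ with $1 \leq k \leq n$ admits a divisor of $n$ in the interval $(k/2, k]$. Writing $\alpha$ for the $p$-adic valuation of $n$ and $m = n/p^{\alpha}$, the divisors of $n$ split cleanly into the divisors of $n/p$ together with the ``extras'' $p^{\alpha} e$ for $e \mid m$, where the extras are precisely the divisors of $n$ that do not divide $n/p$.

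For the forward direction, suppose $T(n) \leq 2$. I would first establish $p \leq \sqrt{2n}$ by letting $L$ be the largest divisor of $n$ strictly less than $p$: any divisor of $n$ below $p$ is coprime to $p$, so no divisor of $n$ lies strictly between $L$ and $p$, and $L, p$ are consecutive in the divisor list. Then $T(n) \leq 2$ yields $p \leq 2L$, and since $\gcd(L,p) = 1$ with $p^{\alpha} \mid n$, we have $L p^{\alpha} \mid n$ and hence $L \leq n/p^{\alpha} \leq n/p$; combining gives $p^2 \leq 2n$. To then show $T(n/p) \leq 2$, I would fix $k \leq n/p$ and produce a divisor of $n/p$ in $(k/2, k]$. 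Applying $T(n) \leq 2$ at $k$ supplies a divisor $d \mid n$ in that interval; if $d \mid n/p$ we are done, and otherwise $d = p^{\alpha} e$ is an extra, so $e \mid m$ sits in $(k/(2p^{\alpha}), k/p^{\alpha}]$. I would then iterate the divisor-density property at successive doubling targets $2e, 4e, \ldots$, producing an ascending chain of divisors of $n$; each time the chain encounters an extra, dividing by $p^{\alpha}$ yields a strictly smaller divisor of $m$, so the chain cannot cycle among extras and must eventually settle on a divisor of $n/p$ that lies inside $(k/2, k]$.

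For the backward direction, assume $T(n/p) \leq 2$ and $p \leq \sqrt{2n}$, and take consecutive divisors $u < v$ of $n$. Let $c$ be the largest divisor of $n/p$ that does not exceed $u$, and, when it exists, let $c'$ be the smallest divisor of $n/p$ that is at least $v$. When $c'$ exists, $c$ and $c'$ must be consecutive in the divisor list of $n/p$, for any divisor of $n/p$ strictly between them would be a divisor of $n$ strictly between $u$ and $v$, contradicting consecutiveness in $n$. Hence $c'/c \leq T(n/p) \leq 2$, which together with $c \leq u$ and $v \leq c'$ gives $v/u \leq 2$. If no such $c'$ exists, then $v$ exceeds every divisor of $n/p$, so $v > n/p$, $u \geq n/p$, and $v$ is an extra. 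The smallest extra above $n/p$ equals $n/L$ where $L$ is the largest divisor of $m$ strictly less than $p$, and applying $T(n/p) \leq 2$ to $L$ and its successor in $n/p$ (which must be at least $p$, since any smaller successor would have to be a divisor of $m$ exceeding $L$ yet less than $p$, contradicting maximality) gives $p/L \leq 2$, whence $n/L \leq 2 \cdot n/p$. Ratios between consecutive extras in $(n/p, n]$ correspond bijectively, via division by $p^{\alpha}$, to ratios of consecutive divisors of $m$ in the range above $m/p$, and a parallel argument using $T(n/p) \leq 2$ bounds these by $2$.

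The main obstacle is the forward-direction verification that $T(n/p) \leq 2$. The iterative doubling strategy is delicate because each step might land on an extra rather than on a divisor of $n/p$, so one must carefully exploit both the doubling guarantee supplied by $T(n) \leq 2$ and the rigid structural constraint that each extra equals $p^{\alpha}$ times a divisor of $m$ in order to terminate the iteration with a genuine divisor of $n/p$ in the prescribed window.
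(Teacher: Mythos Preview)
The paper does not prove this lemma at all; it is attributed to Tenenbaum, and the paper remarks that the proof relies on Tenenbaum's structure theorem (characterising $T(n)\le z$ by the conditions $p_j\le z\,p_1^{a_1}\cdots p_{j-1}^{a_{j-1}}$ on the successive prime factors of $n$). Granting that theorem, both directions of the lemma are immediate: removing one copy of $p_k$ from $n$ leaves all the inequalities for $j<k$ unchanged, and the inequality for $j=k$ is exactly $p_k\le 2n/p_k$, i.e.\ $P(n)\le\sqrt{2n}$.

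Your approach is different: a direct argument that avoids the structure theorem. Your backward direction is essentially sound. The case where a $c'$ exists is clean, and in the remaining case the ``parallel argument'' you allude to can indeed be made precise via complementation in $m$: consecutive extras $p^\alpha e_1<p^\alpha e_2$ above $n/p$ correspond to consecutive divisors $e_1<e_2$ of $m$, and hence to consecutive divisors $m/e_2<m/e_1$ of $m$ below $p$; since every divisor of $n/p$ below $p$ is already a divisor of $m$, these are also consecutive in $n/p$, so $T(n/p)\le 2$ gives $e_2/e_1\le 2$.

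The forward direction, however, has a genuine gap. Your iteration ``at doubling targets $2e,4e,\ldots$'' is not well-defined as a terminating procedure. Starting from $e=d/p^\alpha$ with $d\in(k/2,k]$, the divisors of $n$ you produce in $(2^{j-1}e,\,2^j e]$ only reach the window $(k/2,k]$ once $2^j$ is of size roughly $p^\alpha$. But the claim that ``dividing an encountered extra by $p^\alpha$ yields a \emph{strictly smaller} divisor of $m$'' requires $2^j<p^\alpha$; exactly when the chain arrives at the target interval, that inequality fails, so the descent on the $e$-values need not terminate, and nothing forces a divisor of $n/p$ to land in $(k/2,k]$. In short, the argument does not establish $T(n)\le 2\Rightarrow T(n/p)\le 2$. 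This implication is precisely where Tenenbaum's structure theorem does the work in the cited proof, and your sketch does not supply a substitute.
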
 

\noindent Like Saias, our proof will rely heavily on an analogue of Lemma \ref{TCondition} for strictly $2$-dense integers, which we will give below. Although the statements of these lemmas are quite similar, their proofs differ substantially. The proof of Lemma \ref{TCondition} relies on a structure theorem of Tenenbaum \cite[Lemma 2.2]{tenenbaum} that describes all $z$-dense numbers in terms of their prime factorization, while our approach will not make use of any heavy machinery. We will prove our version of Lemma \ref{TCondition} in three parts.

\begin{lemma}\label{cases} For every squarefree integer $n > 1$, $n$ is strictly $2$-dense if $n/P(n)$ is strictly $2$-dense and $P(n) < \sqrt{n}$. \end{lemma}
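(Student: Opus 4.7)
The plan is to apply Tenenbaum's criterion (Lemma \ref{TCondition}) to obtain the weak inequality $T(n) \leq 2$, then upgrade to strict interior inequalities by a contradiction argument. Set $m = n/P(n)$ and $P = P(n)$. Since $m$ is squarefree with $P \nmid m$, $n = mP$ is squarefree; and since $m$ is even (by strict 2-density), so is $n$. The hypothesis $P < \sqrt{n}$ implies $P \leq \sqrt{2n}$, while $m$ strictly 2-dense gives $T(m) \leq 2$, so Lemma \ref{TCondition} yields $T(n) \leq 2$. The endpoint ratios $d_2(n)/d_1(n)$ and $d_{\tau(n)}(n)/d_{\tau(n)-1}(n)$ both equal $P^-(n) = 2$, so it suffices to show that every interior ratio $d_{i+1}(n)/d_i(n)$ with $2 \leq i \leq \tau(n)-2$ is strictly less than $2$.

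Suppose for contradiction that some interior consecutive pair $D_i < D_{i+1}$ of divisors of $n$ satisfies $D_{i+1} = 2D_i$. Since $n$ is squarefree, $D_i$ must be odd. Every divisor of $n$ has the form $d$ or $Pd$ with $d \mid m$, and in either case $2D_i \mid n$ forces $2d \mid m$. If any divisor of $m$ lay strictly between $d$ and $2d$, multiplying by $1$ or by $P$ as appropriate would insert a divisor of $n$ strictly between $D_i$ and $D_{i+1}$; hence $d$ and $2d$ are consecutive divisors of $m$. The strict 2-density of $m$ pins this ratio-$2$ step to an endpoint, so $\{d, 2d\} = \{1, 2\}$ or $\{d, 2d\} = \{m/2, m\}$.

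This leaves four sub-cases. Two of them, $D_i = d = 1$ and $D_i = Pd = n/2$, force $i \in \{1, \tau(n)-1\}$ and so are not interior. The case $D_i = m/2$, $D_{i+1} = m$ is ruled out as follows: the 2-density of $m$ supplies a divisor of $m$ in $(m/(2P), m/P]$, and because $P \nmid m$ the endpoint $m/P$ is not an integer, so this divisor $e$ lies in the open interval $(m/(2P), m/P)$; then $Pe$ is a divisor of $n$ lying in $(m/2, m)$, contradicting consecutiveness. The remaining and most delicate sub-case is $D_i = P$, $D_{i+1} = 2P$, where the strict inequality $P < \sqrt{n}$ (equivalently $P < m$) is essential: if $2P \leq m$, the 2-density of $m$ yields a divisor of $m$ in $(P, 2P]$, which cannot equal $2P$ because $P \nmid m$, so it lies in $(P, 2P)$; if $2P > m$, then $P < m < 2P$ places $m$ itself, a divisor of $n$, in the open interval $(P, 2P)$. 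Either way, $P$ and $2P$ are not consecutive in the divisor sequence of $n$, the desired contradiction.

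The main obstacle is this last sub-case, where one must use $P < \sqrt{n}$ precisely to squeeze $m$ into $(P, 2P)$ when $P$ is close to $m$; the weaker Tenenbaum hypothesis $P \leq \sqrt{2n}$ would not suffice for this step. Everything else is structural bookkeeping with the factorization of divisors of $n = mP$.
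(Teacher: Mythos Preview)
Your proof is correct, but it takes a different route from the paper's. The paper argues directly: writing $m=n/P(n)$ and $p=P(n)$, it observes that the divisor sequence of $n=mp$ is the merge of $\{d_i\}$ and $\{pd_i\}$, so the only interior places where a ratio $\ge 2$ can survive are the pairs $(m/2,m)$ and $(p,2p)$ inherited from the endpoints of the two chains. It then exhibits a divisor of $n$ in each of these open intervals by a three-way case split on whether $p>m/2$, $m/4<p<m/2$, or $p<m/4$, using throughout the principle that a strictly $2$-dense $m$ has a divisor in $(x,2x)$ whenever $1<x<m/2$. No appeal to Lemma~\ref{TCondition} is made.

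Your argument instead invokes Lemma~\ref{TCondition} as a black box to obtain $T(n)\le 2$ immediately, and then isolates the ``strict'' upgrade: assuming an interior consecutive pair $D_i,2D_i$, you use squarefreeness to force $D_i$ odd, push the ratio-$2$ step down to a consecutive pair $(d,2d)$ in $m$, pin it to an endpoint of $m$ by strict $2$-density, and dispatch the four resulting sub-cases. The two non-endpoint sub-cases $(D_i,D_{i+1})=(m/2,m)$ and $(P,2P)$ are exactly the two obstructions the paper handles, and your treatment of them---finding a divisor $e$ of $m$ in $(m/(2P),m/P)$ for the first, and splitting on $2P\le m$ versus $2P>m$ for the second---is a clean alternative to the paper's trichotomy on the size of $p$. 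Your approach has the virtue of cleanly separating the weak inequality $T(n)\le 2$ (delegated to Tenenbaum) from the strictness upgrade; the paper's approach is self-contained and avoids citing Lemma~\ref{TCondition}, which the paper explicitly notes rests on a structure theorem it does not reprove.
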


\begin{proof} Assume that $n$ is squarefree, $m = n/P(n)$ is strictly $2$-dense, and $P(n) < \sqrt{n}$. In other words we are assuming that $P(n)^2 < n,$ so $P(n) < \frac{n}{P(n)} = m$. Suppose that the divisors of $m$ are $1 = d_1 < d_2 < \cdots < d_k = m$. Let $P(n) = p.$ Then $p = p d_1 < p d_2 < \cdots < p d_k = pm$, along with the divisors of $m$, form the divisors of $pm.$ Now, since $m$ is strictly $2$-dense, it follows that $m$ is even, hence $n$ is even as well. As a result, we have $$\frac{d_2}{d_1} = 2 = \frac{pd_k}{pd_{k-1}}.$$ Thus, the only ratios that may pose an obstruction to $mp$ being strictly $2$-dense are $\frac{d_k}{d_{k-1}}$ and $\frac{p d_2}{p d_1}$. In order to show that these ratios do not cause a problem, we will show that there exist divisors $d_i, d_j$ of $m$ with $p d_i \in (\frac{m}{2}, m)$ and $d_j \in (p, 2p).$ The general principle behind the argument is that, if $m$ is strictly $2$-dense and $x$ is a real number with $1 < x < m/2$, then $m$ has a divisor in the interval $(x, 2x),$ since otherwise we would have two consecutive divisors $d_i, d_{i+1}$ with $1 < i < \tau(n) -1$ and $d_i \leq x$, $d_{i+1} \geq 2x$, i.e. $\frac{d_{i+1}}{d_i} \geq 2.$ There are three cases to consider:

\textit{Case 1:} If $p > \frac{m}{2}$ then, since $p < m$, we have $p < m < 2p$. Moreover, since $\frac{m}{2} < p < m$, we see that both conditions are satisfied; that is, we can let $d_i = 1$ and $d_j = m$.

\textit{Case 2:} If $\frac{m}{4} < p < \frac{m}{2}$, then $p < \frac{m}{2} < 2p < m$. Hence, we have $p < \frac{m}{2} = d_{k-1} < 2p$ and $\frac{m}{2} < 2p = p d_2 < m$, so both conditions are met.

\textit{Case 3:} If $p < \frac{m}{4}$, consider the interval $(p, 2p)$. Since $p < \frac{m}{4}$, then $2p < \frac{m}{2} = d_{k-1},$ hence the existence of a divisor $d_j$ of $m$ with $d_j \in (p, 2p)$ follows from the fact that $m$ is strictly $2$-dense. Furthermore, since $m$ is strictly $2$-dense, there exists a divisor $d_i$ of $m$ in the interval $(\frac{m}{2p}, \frac{m}{p})$. Therefore, $p d_i \in (\frac{m}{2}, m).$\end{proof}

\begin{lemma}\label{pinduct} If a squarefree integer $m$ satisfies Definition \ref{std} for all ratios of divisors $\frac{d_{i+1}}{d_i}$ up to $d_{i+1} = P(m)$, then $m$ is strictly $2$-dense. \end{lemma}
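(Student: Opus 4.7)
My plan is strong induction on $\omega(m)$, with Lemma \ref{cases} supplying the inductive engine. I would write $p = P(m)$ and $m' = m/p$. The hypothesis always covers the ratio $d_2/d_1$ (since $d_2 \leq p$), so it pins $d_2 = 2$; hence $m$ is even, and when $\omega(m) \geq 2$ also $m'$ is even. The base case $\omega(m) \leq 1$ is immediate: the hypothesis forces $m \in \{1, 2\}$, and $m = 2$ is trivially strictly $2$-dense.

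For the inductive step, the critical structural observation is that every divisor of $m$ either divides $m'$ or is $p$ times a divisor of $m'$, and the latter are all at least $p$. Consequently, the divisors of $m$ at or below $q := P(m')$ coincide exactly with the divisors of $m'$ at or below $q$, in the same order. In the main case $p < m'$, I would use this identification to show that the restricted hypothesis for $m$ (ratios with $d_{i+1} \leq q$) is precisely the restricted hypothesis for $m'$ needed to apply Lemma \ref{pinduct} inductively; the only subtle point is that the ``interior vs.\ endpoint'' distinction matches across the two divisor sequences, which I would verify by noting that a potential clash at $i = \tau(m')-1$ only arises when $m' = q$, i.e.\ $m'$ is prime, forcing $m' = 2$ where everything collapses to $d_2/d_1 = 2$. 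The induction then yields $m'$ strictly $2$-dense, and since $p < m' = m/p$ gives $p < \sqrt{m}$, Lemma \ref{cases} finishes this case.

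The edge case $p > m'$ (the equality $p = m'$ is ruled out by squarefreeness) must be handled by hand, and I expect this to be the main obstacle. Here every divisor of $m'$ is strictly less than $p$, so all ratios $d_{i+1}(m')/d_i(m')$ appear among the constrained ratios of $m$'s hypothesis. If $\omega(m') \geq 2$, the even integer $m'$ has second-largest divisor $m'/2$, producing the ratio $m'/(m'/2) = 2$; but the interior range $1 < i < \tau(m) - 1 = 2\tau(m')-1$ in $m$'s hypothesis places this ratio in the interior, forcing the contradiction $2 < 2$. Therefore $\omega(m') \leq 1$, so $m' \in \{1, 2\}$: the subcase $m' = 1$ gives $m = p = 2$, and $m' = 2$ gives $m = 2p$ with the hypothesis $p/2 < 2$ pinning $p = 3$ and $m = 6$, both of which are strictly $2$-dense by direct inspection.
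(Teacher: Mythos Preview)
Your proof is correct and follows essentially the same approach as the paper: induction on $\omega(m)$ with Lemma~\ref{cases} as the engine, ruling out $P(m) > m/P(m)$ by locating the ratio $(m/P(m))/(m/2P(m)) = 2$ among the constrained interior ratios. Your treatment is in fact slightly more careful than the paper's, since you explicitly isolate and handle the residual case $m = 6$ (where that ratio sits at the endpoint $i = 1$ and so does not directly yield a contradiction).
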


\begin{proof}

We proceed by induction on the number of distinct prime factors of $m$. For our base case, we observe that if $m$ has $1$ distinct prime factor, then the only integer $m$ for which Definition \ref{std} holds for all divisors up to $P(m)$ is $2,$ which is strictly $2$-dense. For our induction hypothesis, we suppose that if $m$ has at most $\ell - 1$ distinct prime factors and $\frac{d_{i+1}}{d_i} < 2$ holds for all pairs of consecutive divisors $(d_i, d_{i+1})$ up to $d_{i+1} = p_{\ell-1}$, then $m$ is strictly $2$-dense. Now, consider an integer $m$ with $\ell$ distinct prime factors, i.e., $m = p_1 \cdots p_\ell$, and suppose that the strictly $2$-dense definition holds for all consecutive pairs of divisors $(d_i, d_{i+1})$ up to $d_{i+1} = p_\ell$. If $p_\ell > \frac{m}{p_\ell}$, then there is no divisor of $m$ between $\frac{m}{2 p_\ell}$ and $\frac{m}{p_\ell}$. This contradicts our assumption that all pairs of consecutive divisors $(d_i, d_{i+1})$ up to $d_{i+1} = p_\ell$ satisfy $\frac{d_{i+1}}{d_i} < 2.$ Thus, it must be the case that $p_\ell < \frac{m}{p_\ell}$, i.e., $p_\ell < \sqrt{m}.$ Moreover, since $P(\frac{m}{p_\ell}) = p_{\ell-1} < p_\ell$, then our induction hypothesis implies that $\frac{m}{p_\ell}$ is strictly $2$-dense. Therefore, $m$ is strictly $2$-dense by Lemma \ref{cases}.\end{proof} 

%%%Strictly 2-dense Condition
\begin{lemma} \label{T'Condition} For every squarefree integer $n > 6$, $n$ is strictly $2$-dense if and only if $n/P(n)$ is strictly $2$-dense and $P(n) < \sqrt{n}.$ \end{lemma}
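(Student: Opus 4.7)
The forward (``if'') direction is exactly Lemma \ref{cases}, so all the content sits in the reverse direction. My plan is: assume $n > 6$ is squarefree and strictly $2$-dense, write $n = p_1 p_2 \cdots p_k$ with $2 = p_1 < p_2 < \cdots < p_k$, and establish both (a) $P(n) = p_k < \sqrt{n}$ and (b) $m := n/p_k$ is strictly $2$-dense. A useful preliminary is that $k \geq 3$: for $k = 1$ only $n = 2$ qualifies, and for $k = 2$ the inequality $d_3/d_2 = p_2/2 < 2$ forces $p_2 = 3$, hence $n = 6$, both of which are excluded by the hypothesis $n > 6$.

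For (a), I would argue by contradiction. Suppose $p_k > \sqrt{n}$ (equality is impossible since $n$ is squarefree with $k \geq 2$), so $p_k > m$. Any divisor of $n$ divisible by $p_k$ is at least $p_k > m$, while every other divisor of $n$ divides $m$; hence the first $\tau(m) = 2^{k-1}$ divisors of $n$ are exactly those of $m$ in order, with $d_{2^{k-1}} = m$. The next two divisors of $n$ must then be $d_{2^{k-1}+1} = p_k$ and $d_{2^{k-1}+2} = 2p_k$ (using $2 \mid n$). Consequently
$$\frac{d_{2^{k-1}+2}}{d_{2^{k-1}+1}} = 2,$$
and for $k \geq 3$ the index $2^{k-1}+1$ satisfies $1 < 2^{k-1}+1 < 2^k - 1 = \tau(n) - 1$, contradicting strict $2$-density.

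For (b), the cleanest route is to invoke Lemma \ref{pinduct}. It suffices to verify that $m$ is squarefree (clear), that $e_2/e_1 = 2$ (clear, since $m$ is even), and that $e_{j+1}/e_j < 2$ for every $j > 1$ with $e_{j+1} \leq P(m) = p_{k-1}$. The key observation is that any divisor of $n$ not exceeding $p_{k-1}$ is coprime to $p_k$ and hence divides $m$; so the initial segments of the divisor lists of $n$ and $m$ up to $p_{k-1}$ coincide. Thus $e_{j+1}/e_j$ equals the corresponding consecutive-divisor ratio in $n$, and since $j+1 \leq \tau(m) = 2^{k-1}$ we have $j \leq 2^{k-1}-1 < 2^k - 1 = \tau(n)-1$, placing the index in the strict range for $n$. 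Strict $2$-density of $n$ then gives $< 2$, and Lemma \ref{pinhdr} applies. (Here I mean Lemma \ref{pinduct}.)

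The main obstacle I would expect if one tried to prove (b) head-on is that consecutive divisors $e_j, e_{j+1}$ of $m$ may be separated in $n$'s divisor list by several multiples $p_k e_\ell$, and a product of several consecutive ratios of $n$, each less than $2$, need not itself be less than $2$. Routing the argument through Lemma \ref{pinduct} restricts attention to the initial segment of divisors, where no such interleaving can occur, and neatly sidesteps this combinatorial issue.
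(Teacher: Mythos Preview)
Your proposal is correct and follows essentially the same approach as the paper: one direction is Lemma~\ref{cases}, and the other is handled by reducing to Lemma~\ref{pinduct} via the observation that the divisors of $n$ and of $m=n/P(n)$ agree on the initial segment up to $P(m)$. The only cosmetic differences are that for part~(a) the paper obtains its contradiction from the ratio $d_{i+1}/d_i = m/(m/2)=2$ rather than your $2p_k/p_k=2$, and for part~(b) the paper phrases the appeal to Lemma~\ref{pinduct} as a proof by contradiction rather than a direct verification of its hypothesis; the substance is the same.
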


\begin{proof} Let $n$ be a squarefree integer larger than $6$, let $m = n/P(n)$, and let $p = P(n).$ Assume that $n$ is strictly $2$-dense. Then, $n$ must be even, hence $m$ is even. First, we assume that $p > m$. Then, we have $\frac{m}{2} < m < p$ as consecutive divisors of $n$. Since $n > 6$ then $m = \frac{n}{p} > \frac{6}{3} = 2$, hence $\frac{m}{2} > 1$. Therefore, there exists some integer $1 < i < \tau(n)-1$ for which $d_i = \frac{m}{2}$, $d_{i+1} = m$ and $d_{i+2} = p$. Clearly, $\frac{d_{i+1}}{d_i} \geq 2$, so $n$ cannot be strictly $2$-dense. 

Secondly, continuing with our assumption that $n$ is strictly $2$-dense, we suppose that $m$ is not. Then, there is some pair of divisors $d_i, d_{i+1}$ of $m$, with $1 < i < k-1$, for which $\frac{d_{i+1}}{d_i} \geq 2$. Without loss of generality, we may assume that $d_i$, $d_{i+1}$ is the smallest pair with this property. Now, in order for $n$ to be strictly $2$-dense, it must be the case that $p$ falls between $d_i$ and $d_{i+1},$ since $p$ is the smallest divisor of $n$ that is not also a divisor of $m$. However, since $p$ is the largest prime divisor of $n$, then $P(m) < p$, so the strictly $2$-dense definition holds for all divisors of $m$ up to $P(m)$. But, if the strictly $2$-dense definition holds for all divisors of $m$ up to $P(m)$ then, by Lemma \ref{pinduct}, $m$ must be strictly $2$-dense.  However, this contradicts our prior assumption that $m$ is not strictly $2$-dense. Therefore, if $m$ is not strictly $2$-dense then $n$ cannot be strictly $2$-dense.

The second direction of the proof follows immediately from Lemma \ref{cases}.\end{proof}                                                                                                                                                                                                                                                                                                                                                                                                                                                                                                                                                                                                                                                                                                                                                                                                                                                                                                                                                                                                                                                                                                                                                                                                                                                                                                                                                                                                                                                                                                                                                                                                                                                                                                                                                                     

In order to obtain a lower bound for $D'(x)$, we will use Lemmas \ref{TCondition} and \ref{T'Condition} to show essentially that a positive proportion of $2$-dense integers are strictly $2$-dense. Thus, Saias' lower bound for $D(x)$ will also serve as a lower bound for $D'(x)$. Before we commence with the proof, we will pause to discuss a technique from sieve methods that will be useful in this context. For the remainder of this section, we will use the following notation. Let $u = \frac{\log x}{\log y}.$ Let $\rho(t)$ be the Dickman function, i.e., $\rho(t)$ is the continuous real-valued function with $\rho(t) = 1$ for $t \leq 1$ and for $t > 1$, $\rho(t)$ satisfies the differential equation $t\rho'(t) + \rho(t - 1) = 0.$ As in \cite{saias}, we will define $$H(x, y) = \left\{
\begin{array}{l l}   
    \frac{x \log 2}{\log x}(1 - \frac{1}{\log_2(\log x/\log 2)}) \rho(u(1- \frac{1}{\sqrt{\log y}}) - 1) & (0 < u < 3(\log x)^{1/3}),\\
    \Psi(x, y) & (u \geq 3(\log x)^{1/3}).
\end{array}\right.$$ Saias constructs $H(x, y)$ to serve as a more tractable model for $D(x, y)$. After much difficult work, he shows that $$D(x) \asymp H(x, x) \asymp \frac{x}{\log x}.$$ In addition, he proves that $H(x,y)$ satisfies the following Buchstab-type inequality:
\begin{lemma}[Saias]\label{saiasmain} For $x \geq 2^{16}, y \geq 2$ and $0 < u < 3(\log x)^{1/3}$, we have $$H(x, y) \geq 1 + \sum_{p \leq \mathrm{min}(y, \sqrt{2x}\ell(x))} H(x/p, p),$$ where $\ell(x) = \exp \{\frac{\log x}{(\log_2 x)(\log_3 x)^3}\}.$\end{lemma}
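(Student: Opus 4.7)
My plan is to verify the Buchstab-type inequality for the model function $H(x,y)$ by first establishing the analogous inequality for $D(x,y)$ itself and then transferring it. Using Lemma \ref{TCondition}, every $2$-dense integer $n \in (1,x]$ with $P(n) \leq y$ can be written uniquely as $n = mp$, where $p = P(n)$ satisfies $p \leq \min(y, \sqrt{2n}) \leq \min(y, \sqrt{2x})$ and where $m = n/p$ is $2$-dense with $P(m) < p$ and $m \leq x/p$. Counting $n=1$ separately yields the identity
$$D(x,y) = 1 + \sum_{p \leq \min(y,\sqrt{2x})} \#\{m \leq x/p : m \text{ is } 2\text{-dense}, P(m) < p\},$$
which is already very close to the stated inequality (with $D$ in place of $H$), the only slack being the strict inequality $P(m) < p$ and the $\ell(x)$ factor.

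Next I would transfer this to $H$ by case analysis on $u = \log x/\log y$. For $u \geq 3(\log x)^{1/3}$ we have $H(x,y) = \Psi(x,y)$, and the desired inequality is essentially the classical Buchstab identity
$$\Psi(x,y) = 1 + \sum_{p \leq y} \Psi(x/p, p^-),$$
restricted (which only loses terms) to primes $p \leq \sqrt{2x}\,\ell(x)$. For $0 < u < 3(\log x)^{1/3}$, I would use the prime number theorem to rewrite
$$\sum_{p \leq \min(y,\sqrt{2x}\ell(x))} H(x/p, p) \approx \int_2^{\min(y,\sqrt{2x}\ell(x))} H(x/t, t) \frac{dt}{\log t},$$
and then exploit the defining differential equation $u\rho'(u) + \rho(u-1) = 0$ of the Dickman function, together with the explicit formula defining $H$ in this range, to compare this integral with $H(x,y) - 1$. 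The logarithmic factor $\log 2/\log x$ in the definition of $H$ is precisely calibrated so that the derivative of $H(x,y)$ with respect to $\log y$ (through $u$) reproduces $\rho(u-1)$ up to the slow correction $(1 - 1/\log_2(\log x/\log 2))$, which provides the slack needed for a clean inequality rather than an asymptotic equality.

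The main obstacle I anticipate is twofold. First, the boundary between the two branches of $H$ at $u = 3(\log x)^{1/3}$: one must check that the pieces glue together in a way that respects the Buchstab inequality, since for primes $p$ near $\sqrt{2x}\,\ell(x)$ the factor $H(x/p, p)$ may lie in the $\Psi$-regime while $H(x,y)$ is in the Dickman-like regime. Second, the absorption of prime number theorem error terms: the function $\ell(x) = \exp\{\log x/(\log_2 x)(\log_3 x)^3\}$ is chosen to grow faster than any power of $\log_2 x$ yet remain subpolynomial, which is precisely what is needed for the integral approximation of $\sum_p H(x/p,p)$ to be valid with errors small enough to be swallowed by the $1 - 1/\log_2(\log x/\log 2)$ slack built into $H$. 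Since this is Saias's result from \cite{saias}, I would rely on his proof for the delicate calculus underlying this calibration rather than reprove it in detail here.
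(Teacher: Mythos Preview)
The paper does not prove this lemma at all; it is quoted as a result of Saias from \cite{saias} and used as a black box. Your final sentence, deferring to Saias for the delicate calculus, is therefore exactly what the paper does.

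That said, your sketch contains a conceptual misstep worth flagging. The Buchstab inequality for $H$ is not obtained by first proving it for $D$ and then ``transferring'' it; $H$ is an explicitly defined analytic function, and the lemma is a purely computational statement about that function, independent of $D$. Your first paragraph, deriving the Buchstab identity for $D$ via Lemma~\ref{TCondition}, is correct in itself but plays no role in proving the present lemma---it is used elsewhere in Saias's argument, in the opposite direction, to compare $D$ against $H$ once the inequality for $H$ is already in hand. What you describe in your second and third paragraphs is closer to the actual proof: one works directly with the definition of $H$, uses the differential-delay equation for $\rho$ together with the prime number theorem to control the sum $\sum_p H(x/p,p)$ in the Dickman regime, handles the $\Psi$ regime via the classical Buchstab identity, and checks that the correction factors $(1 - 1/\log_2(\log x/\log 2))$ and the shift $u \mapsto u(1 - 1/\sqrt{\log y}) - 1$ have been calibrated to absorb all error terms. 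Your identification of the two main obstacles---the gluing at the boundary $u = 3(\log x)^{1/3}$ and the role of $\ell(x)$ in controlling the range of summation---is accurate.
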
 

\noindent In other words, $H(x, y)$ was constructed so that it has roughly the same order of magnitude as $D(x, y)$ and, like $D(x, y)$, it can be expressed recursively using a Buchstab inequality. In fact, Saias was able to demonstrate a more explicit relationship between these two functions, which will be useful in our lower bound argument for $D'(x)$:

\begin{lemma}[Saias]\label{newsaias} There exists a constant $c \geq 1$ such that, under the conditions $x \geq 2$ and $y \geq 2$, we have $$D(x, y) \leq c H(x, y).$$ \end{lemma}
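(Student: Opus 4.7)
The plan is to prove the inequality by induction on $x$, using that both $D$ and $H$ satisfy comparable Buchstab-type inequalities. First I would dispatch the easy range: when $u \geq 3(\log x)^{1/3}$, the definition gives $H(x,y) = \Psi(x,y)$, and every integer counted by $D(x,y)$ is $y$-smooth, so $D(x,y) \leq \Psi(x,y) = H(x,y)$, and the lemma holds with $c=1$ in this range.

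For $0 < u < 3(\log x)^{1/3}$, I would derive an upper Buchstab-type inequality for $D$ paralleling Saias' Lemma \ref{saiasmain} for $H$. Any squarefree $2$-dense integer $n > 1$ with $P(n) \leq y$ factors uniquely as $n = pm$ with $p = P(n)$; by Lemma \ref{TCondition}, $m$ is itself squarefree $2$-dense with $P(m) < p$, and $p \leq \sqrt{2n} \leq \sqrt{2x}$. Summing over $p$, and dropping the implicit lower bound $m \geq p/2$ that comes with the equivalence, yields
\begin{equation*}
D(x, y) \leq 1 + \sum_{p \leq \min(y,\, \sqrt{2x})} D(x/p, p).
\end{equation*}
Since $\ell(x) \geq 1$, the primes appearing here form a subset of those on the right of Saias' Buchstab bound in Lemma \ref{saiasmain}, so by nonnegativity of $H$,
\begin{equation*}
1 + \sum_{p \leq \min(y,\, \sqrt{2x})} H(x/p, p) \leq H(x, y).
\end{equation*}

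The induction is now mechanical. Fix $c \geq 1$ large enough that $D(x,y) \leq c H(x,y)$ holds for all $2 \leq x \leq 2^{16}$, a compact range on which $D$ is trivially bounded and $H$ is bounded below by a positive constant (after treating the isolated singularities of the defining formula separately). For $x > 2^{16}$, the bound $p \leq \sqrt{2x}$ forces $x/p \geq \sqrt{x/2} \geq 2$, so the induction hypothesis applies to each summand $D(x/p,p)$, and
\begin{equation*}
D(x, y) \leq 1 + c \sum_{p \leq \min(y,\, \sqrt{2x})} H(x/p, p) \leq 1 + c\bigl(H(x,y) - 1\bigr) \leq c\, H(x,y),
\end{equation*}
where the final step uses $c \geq 1$.

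The main obstacle I expect is the bookkeeping across the boundary $u = 3(\log x)^{1/3}$ of the piecewise definition of $H$: when $(x,y)$ lies in the analytic regime but some $(x/p, p)$ lies in the $\Psi$-regime, one must confirm that Saias' Buchstab inequality still gives a clean comparison and that the base-case constant is large enough to absorb any transitional discrepancy. A secondary subtlety is the small-$x$ behavior of $H$, where factors such as $\log_2(\log x/\log 2)$ become delicate and may require separate treatment. These issues appear bookkeeping-heavy rather than conceptual, and can be handled by enlarging $c$.
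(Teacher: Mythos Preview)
The paper does not supply a proof of this lemma at all: it is stated with the attribution ``[Saias]'' and cited from \cite{saias}, then used as a black box in the proof of Theorem~\ref{lbdensity}. So there is no proof in the paper for your attempt to be compared against.

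That said, your outline is essentially the standard route and is, in spirit, how Saias proceeds. The upper Buchstab identity for $D$ that you write down is exactly what one gets from Lemma~\ref{TCondition}, and matching it against the lower Buchstab inequality for $H$ in Lemma~\ref{saiasmain} to run an induction is the right idea. The two caveats you flag are genuine. First, the induction is on a real parameter, so one should phrase it as ``assume the bound for all $x' < x$ with $x' \geq 2$'' and note that $D(x,y)$ is constant on intervals between integers; the reduction $x \mapsto x/p$ with $p \geq 2$ then terminates. Second, and more seriously, the base case is not as innocent as you suggest: the analytic branch of $H$ contains the factor $1 - 1/\log_2(\log x/\log 2)$, which is not even defined at $x=2$ and is nonpositive for a range of small $x$, so ``$H$ is bounded below by a positive constant on $[2,2^{16}]$'' is false as written. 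In Saias' paper this is handled by additional conventions on $H$ in the small-$x$ regime (and by the precise formulation of his Buchstab inequality), so to make your argument rigorous you would need to import those conventions rather than just enlarge $c$. With that repair, your sketch is sound.
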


Now that we have some information about the behavior of the function $H(x, y)$ and its relationship with $D(x,y)$, we have all of the ingredients necessary to prove our lower bound for $D'(x)$. 

%%% Main Lower Bound Argument
\begin{theorem}\label{lbdensity} For $x \geq 2$, we have $$D'(x) \gg \frac{x}{\log x}.$$ \end{theorem}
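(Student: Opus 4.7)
The plan is to translate the recursive characterization in Lemma \ref{T'Condition} into a Buchstab-type inequality for
$$D'(x,y) := \#\{n \leq x : n \text{ strictly 2-dense and } P(n) \leq y\},$$
and then to compare $D'(x,y)$ inductively with Saias's model function $H(x,y)$ using Lemma \ref{saiasmain}. Setting $y = x$ at the end and invoking $H(x,x) \asymp x/\log x$ then yields $D'(x) = D'(x,x) \gg x/\log x$.

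First, by Lemma \ref{T'Condition} every strictly 2-dense integer $n > 6$ decomposes uniquely as $n = pm$ with $p = P(n)$ prime, $m$ strictly 2-dense, $P(m) < p$, and $p < m$ (equivalently $p < \sqrt{n}$). Organising the count by the value of $p$ gives the Buchstab-type inequality
$$D'(x,y) \geq \sum_{p \leq \min(y,\sqrt{x})} \bigl[D'(x/p,\,p^-) - D'(p,\,p^-)\bigr] + O(1),$$
where $D'(\cdot,p^-)$ restricts to $P(m) < p$, and the subtracted term (coming from the $p < m$ requirement) is negligible since strictly 2-dense integers in $[1,p]$ are extremely sparse.

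Next, I would induct on $\lfloor x \rfloor$ to prove that for a sufficiently small positive constant $c$, the bound $D'(x,y) \geq c H(x,y)$ holds for all $x \geq x_0$ and $2 \leq y \leq x$, with small $x$ handled by the base case. The inductive step substitutes the hypothesis into the recursion above to give
$$D'(x,y) \geq c \sum_{p \leq \min(y,\sqrt{x})} H(x/p,\,p) - O(\text{error}),$$
after which it remains to show $\sum_{p \leq \min(y,\sqrt{x})} H(x/p,p) \gg H(x,y)$. For this I would combine Lemma \ref{saiasmain} with a matching upper Buchstab estimate for $H$ extracted from its explicit Dickman-function form, noting that the primes in the band $\sqrt{x} < p \leq \sqrt{2x}\,\ell(x)$ contribute only a lower-order amount to the $H$-sum.

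The main obstacle is precisely this last comparison. Lemma \ref{saiasmain} gives the $H$-sum as a lower bound for $H(x,y) - 1$, whereas the induction requires the reverse — that the truncated $p \leq \sqrt{x}$ sum still captures a constant fraction of $H(x,y)$. The cutoff at $\sqrt{x}$ is forced by the strictly 2-dense condition $p < m$ and is strictly tighter than Saias's $\sqrt{2x}\,\ell(x)$, so verifying that primes in the narrow gap produce only a lower-order contribution (so that the induction constant $c$ can be taken uniform in $x$) is the technical heart of the argument; once in hand, the inductive conclusion together with $H(x,x) \asymp x/\log x$ immediately finishes the proof.
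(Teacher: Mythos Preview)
Your approach is genuinely different from the paper's, and the obstacle you flag is real and not minor. The paper does \emph{not} attempt a direct Buchstab induction for $D'(x,y)$; instead it leverages Saias's results for $D(x)$ as a black box. The paper's strategy is: (i) count the $2$-dense integers $n\le x$ that fail to be strictly $2$-dense because of an ``obstruction'' at some dyadic level $k$ (i.e., $n=mpj$ with $m\in(2^{k-1},2^k]$ $2$-dense, $m<p<2m$ prime, and $j$ completing $n$ to a $2$-dense number); (ii) use Lemma~\ref{newsaias} and Lemma~\ref{saiasmain} in the stated direction to show the contribution from $k>C$ is $O_C(x/(C\log x))$; (iii) apply a pigeonhole argument on the $2^C$-smooth part of the remaining $n$'s, then multiply by the missing small primes to force the small-divisor ratios strictly below $2$. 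This uses only the inequality $H(x,y)\ge 1+\sum_p H(x/p,p)$ (as an \emph{upper} bound on the sum), never its reverse.

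Your plan, by contrast, needs $\sum_{p\le\min(y,\sqrt{x})}H(x/p,p)\ge (1-o(1))H(x,y)$ to close the induction, which is the reverse of Lemma~\ref{saiasmain} and is not available in this paper. Even granting that Saias proves such an inequality elsewhere, you would have to control the multiplicative loss $(1-o(1))$ uniformly over $\sim\log\log x$ levels of recursion, simultaneously absorbing at each level the subtracted terms $D'(p,p^-)$ and the primes lost in the gap $\sqrt{x}<p\le\sqrt{2x}\,\ell(x)$. Both of these are genuinely $o(H)$ at the top level, but making the accumulated loss harmless requires exactly the kind of delicate bookkeeping Saias carries out for $D$; you would essentially be reproducing his entire lower-bound proof with a tighter cutoff. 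The paper sidesteps all of this with the small-prime pigeonhole trick, which is the idea your outline is missing.
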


\begin{proof} We will show that a positive proportion of $2$-dense integers are strictly $2$-dense, except for some possible obstructions at small primes. We will then use a counting argument to deal with these small obstructions. We begin by counting integers $n \leq x$ that are $2$-dense but not strictly $2$-dense. Let $n = mpj,$ where $m$ is a $2$-dense integer, $p$ is a prime satisfying $m < p < 2m$, and $j$ is an integer that has the following properties: $j \leq x/mp$, $P^-(j) > p$ and the prime factors of $j$ satisfy the conditions necessary for $mpj$ to be $2$-dense. Since we have specified that $m < p < 2m$ then, by Lemma \ref{TCondition}, $mpj$ is $2$-dense. However, we know from Lemma \ref{T'Condition} that $mpj$ will not be strictly $2$-dense for values of $p$ in this range. Thus, the integers $n$ that we have constructed are all $2$-dense but not strictly $2$-dense. By varying the sizes of $m$ and $j$, as well as our choice of the prime $p$, we can construct, in this manner, all integers that are $2$-dense but not strictly $2$-dense.

Now, let $C > 16$ be an integer that is chosen to be large relative to the size of the constant $\kappa_1$ from Lemma \ref{SaiasD}. For each integer $k > C$, consider those $2$-dense numbers $m \in (2^{k-1}, 2^k]$. Since $m < p < 2m$, we must have $p \in (2^{k-1}, 2^{k+1}).$ We will say that $n$ has an \textit{obstruction at $k$} if $m$ and $p$ land within these intervals, i.e., if $p$ is a prime in our construction that prevents $n$ from being strictly $2$-dense. Thus, for values of $k$ in this range, the number of $2$-dense integers that are not strictly $2$-dense is at most \begin{equation}\label{sums} \sum_{k > C} \ \ \sum_{\substack{m \in (2^{k-1}, 2^k) \\ m \ \twod}} \ \ \sum_{\substack{p \in (2^{k-1}, 2^{k+1}) \\ p \ \mathrm{prime}}} \ \sum_{\substack{j \leq x/mp \\ mpj \ \twod \\ P^-(j) > p}} 1.\end{equation} 

Observe that, when $j = 1$, we are counting \begin{equation}\label{j1}\#\{n \leq x: n = mp, m < p < 2m, m \ \hbox{is} \ 2\hbox{-dense}\}.\end{equation} The conditions that $m \leq x/p$ and $m < p$ together imply that $m < x/m$, i.e., $m < \sqrt{x}$. Similarly, the conditions on $p$ force us to have $p < \sqrt{2x}$. Thus, the quantity counted in \eqref{j1} is at most \begin{equation}\label{otherjsum} 1 + \sum_{p < \sqrt{2x}} \#\{m \leq \sqrt{x}: m \ \hbox{is} \ \twod\}.\end{equation} From Lemma \ref{SaiasD}, we have \begin{equation}\label{jsummation} \#\{m \leq \sqrt{x}: m \ \hbox{is} \ \twod\} = D(\sqrt{x}) \ll \frac{\sqrt{x}}{\log \sqrt{x}} \ll \frac{\sqrt{x}}{\log x}.\end{equation} Moreover, the number of terms in the summation is at most $\pi(\sqrt{2x}).$ By Chebyshev's Inequality (cf. \cite[Theorem 3.5]{pollack}), $$\pi(\sqrt{2x}) \ll \frac{\sqrt{2x}}{\log \sqrt{2x}} \ll \frac{\sqrt{x}}{\log x}.$$ Therefore, the quantity counted in \eqref{otherjsum} is bounded above by a constant times $$\frac{\sqrt{x}}{\log x} \cdot \frac{\sqrt{x}}{\log x} = \frac{x}{\log^2 x},$$ which is negligible compared with the magnitude of $D(x)$ given in Lemma \ref{SaiasD}. 

Assume hereafter that $j > 1$ and, for now, let $k$ be fixed. Our first order of business will be to bound the integers counted by the sum in $j$ in \eqref{sums}. We have \begin{align}\sum_{\substack{1 < j \leq x/mp \\ mpj \ \twod \\ P^-(j) > p}} 1 & = \#\{n \leq x : mp \mid n, n \ \hbox{is} \ \twod\} \notag \\ & = \sum_{q \leq x} \# \{qM \leq x : mp \mid M, M \ \hbox{is} \ 2\hbox{-dense}, q \ \hbox{is prime}, P(M) < q < 2M\} \label{newBuchstab} \\ & \leq \sum_{q \leq x/mp} D(x/mpq, q) \notag,\end{align} where the second line follows from Lemma \ref{TCondition} and the third line follows from the definition of $D(x, y).$ We can improve upon this final bound slightly. Namely, since we are counting integers $M$ with $\frac{q}{2} < M \leq \frac{x}{q}$ in the second line, we see that $D(x/mpq, q) = 0$ when $q > \sqrt{2x}$. Now, to simplify our notation, let $z = x/mp$. Then, using the bound that we just derived for $q$ in conjunction with \eqref{newBuchstab} yields \begin{align*}  \sum_{\substack{j \leq z \\ mpj \ \twod \\ P^-(j) > p}} 1 & \leq \sum_{q \leq \mathrm{min}(z, \sqrt{2x})} D(z/q, q) \\ & \leq c \sum_{q \leq \mathrm{min}(z, \sqrt{2x})} H(z/q, q), \end{align*} where the second inequality follows from Lemma \ref{newsaias}. Since $j > 1$ and $P^-(j) > p$, then $j > p > 2^{C}$ implies, in particular, that $z > 2^{16}$. Thus, we can apply Lemma \ref{saiasmain} to obtain an upper bound of $cH(z, z),$ since $\ell(z) \geq 1$ when $z \geq 2^{16}.$ By appealing to the definition of the function $H(x, y)$, we have \begin{equation}\label{mpeq} \sum_{\substack{j \leq z \\ mpj \ \twod \\ P^-(j) > p}} 1 \ll H(z, z) \ll \frac{z}{\log z} = \frac{x}{mp \log (x/mp)}.\end{equation} 

Now, it will suffice to replace $\log(x/mp)$ with $\log x$ in the denominator of \eqref{mpeq}. The argument boils down to showing that $m < \exp\{C \sqrt{\log x}\}$, which we will now demonstrate. Instead of estimating the full sum in $j$, we could ignore the stipulation that $mpj$ is $2$-dense and use a cruder estimate for $$\sum_{\substack{j \leq x/mp \\ mpj \ \mathrm{squarefree} \\ P^-(j) > p}} 1.$$ Since $x/mp > p$, we can use Brun's Sieve (cf. \cite[Theorem 2.2]{hr}) to show that $$\#\{j \leq x/mp : q \mid j, q \ \hbox{prime} \ \Rightarrow q > p\} \ll \frac{x}{mp} \prod_{q \leq p} \left( 1 - \frac{1}{q}\right).$$ Moreover, since $2^{k-1} < p \leq x$, we can apply Mertens' Theorem (cf. \cite[Theorem 3.15]{pollack}), which allows us to obtain $$\prod_{q \leq 2^{k-1}} \left(1 - \frac{1}{q}\right) \ll \frac{1}{\log 2^{k-1}} \ll \frac{1}{k}.$$ Thus, we have the following crude estimate for the sum in $j$: $$\frac{x}{mp} \prod_{q \leq 2^{k-1}}\left(1 - \frac{1}{q}\right) \ll \frac{x}{mpk}.$$ Using our crude estimate in \eqref{sums} yields $$\sum_{\substack{m \in (2^{k-1}, 2^k) \\ m \ \twod}} \ \sum_{\substack{p \in (2^{k-1}, 2^{k+1}) \\ p \ \mathrm{prime}}} \ \sum_{\substack{j \leq x/mp \\ mpj \ \mathrm{squarefree} \\ P^-(j) > p}} 1 \ll \sum_{\substack{m \in (2^{k-1}, 2^k) \\ m \ \twod}} \frac{x}{mk} \sum_{\substack{p \in (2^{k-1}, 2^{k+1}) \\ p \ \mathrm{prime}}} \frac{1}{p} .$$  Now, the largest term in the sum in $p$ is at most $\frac{1}{2^{k-1}}$, and the number of terms in this sum is certainly less than $\pi(2^{k+1})$. Hence, by Chebyshev's Inequality (cf. \cite[Theorem 3.5]{pollack}), we have $$\sum_{\substack{p \in (2^{k-1}, 2^{k+1}) \\ p \ \mathrm{prime}}} \frac{1}{p} \ll \frac{1}{2^{k-1}} \cdot \frac{2^{k+1}}{\log 2^{k+1}} \ll \frac{1}{k}.$$ As a result, $$\sum_{\substack{m \in (2^{k-1}, 2^k) \\ m \ \twod}} \frac{x}{mk} \sum_{\substack{p \in (2^{k-1}, 2^{k+1}) \\ p \ \mathrm{prime}}} \frac{1}{p}  \ll \frac{x}{k^2} \sum_{\substack{m \in (2^{k-1}, 2^k) \\ m \ \twod}} \frac{1}{m}.$$ Similarly, we can use the fact that the largest term in the sum in $m$ is at most $\frac{1}{2^{k-1}}$ and the number of terms is less than $D(2^k).$ Thus, using the upper bound for $D(x)$ given in Lemma \ref{SaiasD}, we obtain $$\frac{x}{k^2} \sum_{\substack{m \in (2^{k-1}, 2^k) \\ m \ \twod}} \frac{1}{m} \ll \frac{x}{k^3}.$$ Summing over all values of $k > y$ allows us to see that $$\sum_{k > y} \frac{x}{k^3} \leq x \int_{y-1}^\infty \frac{1}{t^3} dt = \frac{x}{2(y-1)^2}.$$ In particular, when $y \geq 1 + C\sqrt{\log x}$, we have $$\sum_{k > y} \frac{x}{k^3} \leq \frac{x}{C^2 \log x}.$$ Thus, if $C$ is large, then the number of $2$-dense integers with obstructions at $k \geq C \sqrt{\log x}$ is small relative to the number of $2$-dense integers. Hence, it suffices to take $k < C \sqrt{\log x}$ in our computations, which means that we can take $m < \exp\{C \sqrt{\log x}\}.$ Now, since $p < 2m$ in our construction, then $pm < 2m^2 < 2 \exp\{2C \sqrt{\log x}\}$. Therefore, $$\frac{x}{mp \log(x/mp)} \leq \frac{x}{mp \log(x(2 \exp\{2C \sqrt{\log x}\})^{-1})} \ll \frac{x}{mp \log x}.$$ 

As we have just shown, we can use $\log x$ in lieu of $\log(x/mp)$ in \eqref{mpeq} in order to arrive at a more precise estimate for the sum in $j$. In other words, we have $$\sum_{\substack{m \in (2^{k-1}, 2^k) \\ m \ \twod}} \sum_{\substack{p \in (2^{k-1}, 2^{k+1}) \\ p \ \mathrm{prime}}} \sum_{\substack{j \leq x/mp \\ mpj \ \twod \\ P^-(j) > p}} 1 \ll \sum_{\substack{m \in (2^{k-1}, 2^k) \\ m \ \twod}} \frac{x}{m \log x} \sum_{\substack{p \in (2^{k-1}, 2^{k+1}) \\ p \ \mathrm{prime}}} \frac{1}{p} .$$ Moreover, using the same estimates for the summations in $m$ and $p$ that we found above allows us to obtain $$\sum_{\substack{m \in (2^{k-1}, 2^k) \\ m \ \twod}} \frac{x}{m \log x} \sum_{\substack{p \in (2^{k-1}, 2^{k+1}) \\ p \ \mathrm{prime}}} \frac{1}{p} \ll \frac{x}{k^2 \log x}.$$ Summing over all values of $k > C$ allows us to see that $$\frac{x}{\log x} \sum_{k > C} \frac{1}{k^2} \leq \frac{x}{\log x} \int_C^\infty \frac{1}{t^2} dt \leq \frac{x}{C \log x}.$$ Since we have chosen $C$ to be large relative to the size of Saias' lower bound constant for $D(x)$, then our count of $2$-dense integers with obstructions at $k > C$ is negligible relative to the full count of $2$-dense integers. 

All that remains, then, is for us to consider the $2$-dense integers $n$ that have no obstructions at values of $k > C$. Let $$\mathcal{N} = \{n \leq x: n \ \hbox{is} \ 2\hbox{-dense} \ \hbox{with no obstructions at} \ k > C\}.$$ Since we chose $C$ to be large relative to the implicit constant $\kappa_1$ from Lemma \ref{SaiasD}, we can use this lemma, along with our count of $2$-dense integers with obstructions at $k > C$, to show that for all large $x$, $$\# \mathcal{N} \geq \kappa \frac{x}{\log x},$$ where $\kappa > 0$ is some absolute constant. Define $f: \mathcal{N} \rightarrow \Z^+$ to be a function that maps each element $n \in \mathcal{N}$ to its largest $2$-dense divisor $m$ with all prime factors at most $2^C$. Let $\mathcal{M} = \mathrm{Im} f.$  By the Pigeonhole Principle, there is some $m_0 \in \mathcal{M}$ with at least $\frac{\# \mathcal{N}}{\# \mathcal{M}} \geq \frac{\kappa}{4^{2^C}} \frac{x}{\log x}$ elements in its pre-image,  since Chebyshev's bound (cf. \cite[pg. 108]{pollack}) implies $\prod_{p \leq 2^C} p \leq 4^{2^C}.$ In other words, $m_0$ divides at least the average number of integers in a pre-image. For each $n \in \mathcal{N}$ with $f^{-1}(m_0) = n$, let $$n' = n \prod_{\substack{p \leq 2^C \\ p \ \mathrm{prime} \\ p \nmid m_0}} p.$$ Then, $n'$ is squarefree, since the only primes dividing $n$ that are smaller than $2^C$ are also divisors of $m_0$. Moreover, $n'$ is strictly $2$-dense, since the strict inequality form of Bertrand's Postulate implies that the product of all primes up to $2^C$ is strictly $2$-dense. Finally, since we multiplied every $n$ in the pre-image of $m_0$ by the same sequence of primes, there is a one-to-one correspondence between the strictly $2$-dense integers up to $4^{2^C}x$ that we have constructed and the $2$-dense numbers in the pre-image of $m_0$. Thus, at least $\frac{\kappa}{4^{2^C}} \frac{x}{\log x}$ of the integers up to $4^{2^C}x$ are strictly $2$-dense. Therefore, since $D'(4^{2^C}x) \geq \frac{\kappa}{4^{2^C}} \frac{x}{\log x}$, we have the stated result.\end{proof}

The proof of Theorem \ref{lbdensity} can be used to obtain the following results on the relationship between the practical and $\varphi$-practical numbers. 

\begin{corollary}\label{corollary1} For $x$ sufficiently large, we have  $$\#\{n \leq x: n \ \hbox{is practical but not} \ \varphi\hbox{-practical}\} \gg \frac{x}{\log x}.$$ \end{corollary}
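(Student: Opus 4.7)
The strategy is to extract from the proof of Theorem~\ref{lbdensity} an explicit family of $2$-dense integers of the form $n = 66j$ that are squarefree and $2$-dense (hence practical), and that fail to be $\varphi$-practical because their prime factorization in increasing order begins $2, 3, 11, q_1, \ldots$ with each $q_i > 11$, so that $p_3 = 11 > 8 = m_2 + 2$.

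Take $m = 6$ and $p = 11$ (the only prime in $(m+2,\,2m) = (8,12)$). For any $j$ with $P^-(j) > 11$ such that $66j$ is $2$-dense, the integer $n := 66j$ is squarefree and $2$-dense, hence practical; and Lemma~\ref{necessary} applied at the transition from $m_2 = 6$ to $p_3 = 11$ rules out $\varphi$-practicality. It therefore suffices to show
\[
S(x) \;:=\; \bigl\{\, j \leq x/66 \,:\, P^-(j) > 11 \text{ and } 66j \text{ is $2$-dense}\, \bigr\}
\]
has $\# S(x) \gg x/\log x$, since the map $j \mapsto 66j$ is then injective onto a family of practical, non-$\varphi$-practical integers $\leq x$.

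The lower bound $\# S(x) \gg x/\log x$ is a direct adaptation of the machinery in Section~5. Equation~\eqref{mpeq} already supplies the matching \emph{upper} bound on exactly this sum (specialized to $m = 6$, $p = 11$). The reverse inequality follows by running the Saias-style Buchstab construction that underlies the lower bound in Lemma~\ref{SaiasD} with the $2$-dense practical seed $66$ in place of the trivial seed $1$, combined with the lower-bound counterpart of Lemma~\ref{newsaias} (which Saias proves alongside the upper bound that is cited in Section~5). Concretely, one obtains $\# S(x) \gg H(x/66,\,11) \gg x/\log x$.

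The principal technical obstacle is precisely this passage from Saias' upper-bound inequalities for $D(x,y)$ and $H(x,y)$ (the only direction invoked in Section~5) to the corresponding lower bounds, tailored to the shifted setting in which the starting $2$-dense integer is $66$ rather than $1$. Once the lower bound on $\# S(x)$ is in place, the injection $j \mapsto 66j$ delivers $\gg x/\log x$ integers $\leq x$ that are practical but not $\varphi$-practical, as required.
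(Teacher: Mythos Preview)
Your construction with the seed $66 = 2\cdot 3\cdot 11$ is perfectly sound on the qualitative side: any $n = 66j$ with $P^-(j) > 11$ and $66j$ $2$-dense is squarefree and practical, and by Lemma~\ref{necessary} it is not $\varphi$-practical since $p_3 = 11 > m_2 + 2 = 8$. The gap is in the quantitative step, where you assert $\#S(x) \gg H(x/66,\,11) \gg x/\log x$. The second inequality is simply false. With $y = 11$ fixed and $x \to \infty$ we have $u = \log(x/66)/\log 11 \asymp \log x$, so eventually $u \geq 3(\log x)^{1/3}$ and $H(x/66,11) = \Psi(x/66,11)$, the count of $11$-smooth integers up to $x/66$; this is only polylogarithmic in $x$. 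Nor is $\#S(x)$ the same object as $D(x/66,11)$: the latter counts $2$-dense integers whose \emph{largest} prime is at most $11$, whereas you want integers $j$ whose \emph{smallest} prime exceeds $11$ and for which $66j$ is $2$-dense. No ``lower-bound counterpart of Lemma~\ref{newsaias}'' quoted in the paper delivers this, and invoking Saias' machinery with a fixed seed $66$ in place of $1$ is a nontrivial extension that you have not carried out.

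This is exactly the difficulty the paper's argument is designed to avoid. Rather than proving that a \emph{specific} small configuration (like $2,3,11$ with $5,7$ excluded) occurs in $\gg x/\log x$ of the $2$-dense integers, the paper uses the pigeonhole step from Theorem~\ref{lbdensity}: among $2$-dense $n \leq x$ with no obstructions at $k > C$, some $m_0$ (the small-prime part) has a fiber of size $\geq \frac{\kappa}{4^{2^C}}\frac{x}{\log x}$. Each such $n$ is then multiplied by a fixed factor depending only on $m_0$, chosen so that the result $n'$ satisfies $2^2 \,\|\, n'$ and $P^-(n'/4) \geq 7$; writing $n' = 28M$ and noting $t^{28}-1$ has no divisor of degree $5$ shows $n'$ is not $\varphi$-practical, while practicality follows from Stewart's condition. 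Your approach would work if you could establish the lower bound for $\#S(x)$ directly, but as written the key inequality is incorrect and the argument does not close.
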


\begin{proof} As in the proof of Theorem \ref{lbdensity}, let $$\mathcal{N} = \#\{n \leq x: n \ \hbox{is} \ 2\hbox{-dense with no obstructions at} \ k > C\}.$$ We know that, for sufficiently large $x$, we have $\# \mathcal{N} \geq \kappa \frac{x}{\log x}$, where $\kappa > 0$ is some absolute constant. As before, let $f: \mathcal{N} \rightarrow \Z^+$ map each $n \in \mathcal{N}$ to its largest $2$-dense divisor $m$ satisfying the condition that $P(m) \leq 2^C.$ Then, from the proof of Theorem \ref{lbdensity}, there exists some $m_0 \in \hbox{Im} f$ with at least $\frac{\kappa}{4^{2^C}} \frac{x}{\log x}$ elements in its pre-image. For each $n \in \mathcal{N}$ with $f^{-1}(m_0) = n$, let $$n' = \frac{2 \cdot 7^2}{\gcd(15, m_0)} n \prod_{\substack{7 < p \leq 2^C \\ p \nmid m_0}} p.$$ Since $n$ is squarefree and $2$-dense, then $2^2 \| n'.$ Thus, we can write $n' = 28M$, where $M$ is an integer with $P^-(M) \geq 7.$ Observe that $28$ is not $\varphi$-practical, since $t^{28} - 1$ has no divisor with degree $5$. Moreover, as all prime divisors of $M$ are at least $7$, it follows that $t^{n'}-1$ has no divisor of degree $5$. Hence, $n'$ is not $\varphi$-practical. On the other hand, let $$l = n \prod_{\substack{7 < p \leq 2^C \\ p \nmid m_0}} p.$$ Since $n$ is $2$-dense, Bertrand's Postulate implies that $l$ is $2$-dense, hence practical. In particular, this means that all of the prime factors of $l$ must satisfy the inequality from Proposition 1.3. Now, let $r = \frac{2 \cdot 7^2}{\gcd(15, m_0)}.$ Since $2 \cdot 7^2 > 15$ then, as each prime $p$ dividing $l$ satisfies $p \leq \sigma(m) + 1$, it must be the case that $p \leq \sigma(rm) + 1.$ Therefore, $n'$ is practical. 

In order to construct the integers $n'$, we multiplied every $n$ in the pre-image of $m_0$ by the same number. As a result, there is a one-to-one correspondence between the practical numbers up to $r \cdot 4^{2^C} x$ that we have constructed and the $2$-dense numbers in the pre-image of $m_0.$ Therefore, at least $\frac{\kappa}{4^{2^C}} \frac{x}{\log x}$ of the integers up to $r \cdot 4^{2^C}$ are practical but not $\varphi$-practical. \end{proof}

\begin{corollary}\label{phinotpr} For $x$ sufficiently large, we have $$\#\{n \leq x: n \ \hbox{is $\varphi$-practical but not practical}\} \gg \frac{x}{\log x}.$$\end{corollary}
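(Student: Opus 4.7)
The plan is to mirror the proof of Corollary \ref{corollary1}, but with a construction producing an odd integer (hence not practical) instead of an integer divisible by $28$. I would reuse the set $\mathcal{N}$ and the map $f$ from the proof of Theorem \ref{lbdensity}, and the same pigeonhole argument yields some $m_0 \in \operatorname{Im} f$ whose preimage has size at least $\frac{\kappa}{4^{2^C}} \cdot \frac{x}{\log x}$. For each $n \in f^{-1}(m_0)$, I would set
$$\tilde n = \frac{15}{2\gcd(15, m_0)} \cdot n \cdot \prod_{\substack{5 < p \leq 2^C \\ p \nmid m_0}} p.$$
Since $2 \mid n$ and $\gcd(15, m_0)$ is an odd divisor of $m_0 \mid n$, this $\tilde n$ is a positive integer. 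A prime-by-prime check will show that $\tilde n$ is squarefree and odd, with prime divisors exactly $\{3, 5\} \cup \{p \text{ prime}: 5 < p \leq 2^C\} \cup \{p : p > 2^C,\ p \mid n\}$. Because $\tilde n > 1$ is odd, it is not practical.

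The remaining task is to verify that $\tilde n$ is $\varphi$-practical. Since $\tilde n$ is squarefree, Corollary \ref{weakly} reduces this to checking weak $\varphi$-practicality. Listing the primes dividing $\tilde n$ in increasing order as $q_1 = 3 < q_2 = 5 < q_3 < \cdots$ and setting $\tilde m_i = q_1 \cdots q_i$, I would verify $q_{i+1} \leq \tilde m_i + 2$ at each index. For $i = 0, 1, 2$ the inequalities $3 \leq 3$, $5 \leq 5$, $7 \leq 17$ are immediate, and for subsequent indices corresponding to consecutive primes $\leq 2^C$ they follow from Bertrand's postulate together with the rapid growth of $\tilde m_i$.

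The hard part will be the transition from the largest prime $q_j \leq 2^C$ appearing in $\tilde n$ to the smallest prime $q_{j+1}$ of $n$ exceeding $2^C$: the required inequality $q_{j+1} \leq \tilde m_j + 2$, with $\tilde m_j = \prod_{3 \leq p \leq 2^C} p$, could fail, since a priori $q_{j+1}$ may be as large as $\Theta(x)$ while $\tilde m_j$ depends only on the fixed cutoff $2^C$. To handle this, I would exploit the $2$-density of $n$ together with the ``no obstruction at $k > C$'' restriction defining $\mathcal{N}$, which together force $q_{j+1}$ to sit close to the divisors of $m_0$; the contribution to $f^{-1}(m_0)$ from integers for which the inequality genuinely fails will be discarded using an obstruction-counting estimate analogous to the one developed in the proof of Theorem \ref{lbdensity}, and this discarded portion is negligible compared with $\frac{x}{\log x}$. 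Because $n \mapsto \tilde n$ is multiplication by a fixed rational depending only on $m_0$, it is injective, so the surviving $\tilde n$ produce $\gg x/\log x$ odd $\varphi$-practical integers, each bounded by a constant multiple of $x$; rescaling $x$ at the start then yields the claimed lower bound.
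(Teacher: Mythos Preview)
Your route differs substantially from the paper's, and the place you flag as ``the hard part'' is where the argument, as written, breaks down.

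First, a small point: the worry that $q_{j+1}$ may be as large as $\Theta(x)$ is unfounded. Since $m_0$ is exactly the product of all primes of $n$ that are $\le 2^C$, two-density already forces $q_{j+1}\le 2m_0\le 2\prod_{p\le 2^C}p$, a constant depending only on $C$. The real obstacle is different. Writing $T$ for the set of odd primes $\le 2^C$ not dividing $m_0$, one has $\tilde m_j/m_0=\bigl(\prod_{p\in T}p\bigr)/2$. Because any squarefree $2$-dense $n>2$ has $3\mid n$, the smallest possible element of $T$ is $5$; hence if $T\ne\varnothing$ then $\tilde m_j\ge\tfrac52 m_0>2m_0$, and two-density alone gives $q_{j+i+1}\le 2m_0\,q_{j+1}\cdots q_{j+i}<\tilde m_{j+i}+2$ for every $i\ge 0$. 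So the only problematic fiber is the single value $m_0=\prod_{p\le 2^C}p$, where $\tilde m_j=m_0/2$. There the ratio $\tilde m_{j+i}/m_{s+i}=\tfrac12$ persists for \emph{all} $i$, so the weak $\varphi$-practicality inequality can fail at every large index, not just at the transition you singled out. Your ``discard the bad $n$'' sketch would therefore have to bound, for this specific $m_0$, the $2$-dense $n$ having some prime $p_{i+1}\in(m_i/2,\,m_i]$ with $m_i\ge m_0$. That is doable by the same dyadic bookkeeping as in Theorem~\ref{lbdensity} (the relevant $k$ start around $2^C/\log 2$, giving a contribution $\ll x/(2^C\log x)$), but none of this is in your proposal, and the pigeonhole step gives you no control over which $m_0$ you land on.

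The paper avoids the whole issue with one idea you did not use: it allows a non-squarefree base. Starting directly from a strictly $2$-dense $n$ (so $2\cdot3\cdot5\mid n$), it sets $n'=\tfrac{3}{2}n$ if $7\mid n$ and $n'=\tfrac{21}{2}n$ otherwise, so that in either case $n'=3^2\cdot5\cdot7\cdot\prod_{p>7,\,p\mid n}p$. The point is that $3^2\cdot5\cdot7=315$ is itself $\varphi$-practical and exceeds $2\cdot3\cdot5\cdot7=210$; hence each inequality $p_{i+1}\le 210\,p_5\cdots p_i+2$ coming from $n$ immediately yields $p_{i+1}\le 315\,p_5\cdots p_i+2$, and Lemma~\ref{keylemma} then shows $n'$ is $\varphi$-practical. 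No pigeonhole, no $m_0$, no discarding. If you want to salvage your squarefree construction, you must supply the missing obstruction count for the bad fiber; otherwise, dropping squarefreeness and using the $315$ trick gives a much shorter proof.
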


\begin{proof} In Theorem \ref{lbdensity}, we showed that $\# \{n \leq x: n \ \hbox{even, squarefree and} \ \varphi\hbox{-practical}\} \gg \frac{x}{\log x}.$ Now, either a positive proposition of the integers counted in this set are divisible by $7$ or a positive proportion are not divisible by $7$. In the first case, let $n$ be a strictly $2$-dense number that is divisible by $7$, and let $n' = \frac{3}{2} n.$ In the second case, we can take $n$ to be a strictly $2$-dense number with $(7, n) = 1$, and $n' = \frac{21}{2} n.$ In either case, $n'$ can be re-written in the form $$n' = 3^2 \cdot 5 \cdot 7 \cdot \prod_{\substack{7 < p \leq x \\ p \mid n}} p.$$ Since $3^2 \cdot 5 \cdot 7$ is $\varphi$-practical and $n$ is strictly $2$-dense, then $n'$ is $\varphi$-practical by Lemmas \ref{keylemma} and \ref{modified}. However, $n'$ is not practical since it is odd. \end{proof}

We remark that Corollary \ref{phinotpr} also shows that a positive proportion of $\varphi$-practical numbers are odd.

\textit{Acknowledgements.} I would like to thank my adviser, Carl Pomerance, for sparking my interest in the $\varphi$-practical numbers and for giving me a number of suggestions that allowed me to simplify the contents of this paper substantially. As always, I am grateful for the time and care that he devotes to reviewing my work. I would also like to thank the anonymous referee for suggesting several improvements on the clarity of the exposition.

\providecommand{\bysame}{\leavevmode\hbox
to3em{\hrulefill}\thinspace}
\providecommand{\MR}{\relax\ifhmode\unskip\space\fi MR }
% \MRhref is called by the amsart/book/proc definition of \MR.
\providecommand{\MRhref}[2]{%
  \href{http://www.ams.org/mathscinet-getitem?mr=#1}{#2}
} \providecommand{\href}[2]{#2}

\end{document}